\newcommand{\T}{\mathcal T}
\newtheorem{theorem}{Theorem}[section]
\newtheorem{proposition}[theorem]{Proposition}
\newtheorem{corollary}[theorem]{Corollary}
\newtheorem{conjecture}[theorem]{Conjecture}
\theoremstyle{definition}
\newtheorem{definition}[theorem]{Definition}
\theoremstyle{remark}
\newtheorem{remark}[theorem]{Remark}
\numberwithin{equation}{section}
\title{\bf Volume conjectures for the Reshetikhin-Turaev and the Turaev-Viro invariants}
\author{Qingtao Chen}
\address{Department of Mathematics, ETH Zurich, 
8092 Zurich, Switzerland}
\email{qingtao.chen@math.ethz.ch}
\author{Tian Yang}
\address{Department of Mathematics, Stanford University,
Stanford CA 94305, USA}
\email{yangtian@math.stanford.edu}
\thanks{The research of the first author was partially supported by the National Centre of Competence in Research SwissMAP of the Swiss National Science Foundation. The second author was supported by NSF grant DMS-1405066.}
\begin{document}

\maketitle

\begin{abstract}
We consider the asymptotics of the Turaev-Viro and the Reshetikhin-Turaev invariants of a  hyperbolic $3$-manifold, evaluated at the root of unity $\exp({2\pi\sqrt{-1}}/{r})$ instead of the standard $\exp({\pi\sqrt{-1}}/{r})$. We present evidence that, as $r$ tends to $\infty$, these invariants  grow exponentially with growth rates respectively given by the hyperbolic and the complex volume of the manifold. This reveals an asymptotic behavior that is different from that of Witten's Asymptotic Expansion Conjecture, which predicts polynomial growth of these invariants when evaluated at the standard root of unity. This new phenomenon suggests that the Reshetikhin-Turaev invariants may have a geometric interpretation other than the original one via $SU(2)$ Chern-Simons gauge theory. 
\end{abstract}


\section{Introduction}
In \cite{Wit89}, Witten provided a new interpretation of the Jones polynomial based on  Chern-Simons gauge theory, and expanded on this idea to construct a sequence of complex valued $3$-manifold invariants. This approach was formalized though the representation theory of quantum groups by Reshetikhin and Turaev \cite{RT90, RT91}, who generalized the Jones polynomial to a  sequence of polynomial invariants of a link, later called the colored Jones polynomials of that link. They also defined a sequence of $3$-manifold invariants  corresponding to Witten's invariants. The Reshetikhin-Turaev construction of 3-manifold invariants starts from a surgery description \cite{K78} of the manifold, and evaluates the colored Jones polynomials of the surgery data at certain roots of unity.

 A different approach was developed by Turaev and Viro \cite{TV92} who, from a triangulation of a closed 3-manifold,  constructed  real valued invariants of the manifold by using  quantum $6j$-symbols \cite{KR88}; these Turaev-Viro invariants turned out to be equal to the square of the norm of the Reshetikhin-Turaev invariants \cite{Ro, Tur10, W}. 

Using quantum dilogarithm functions, Kashaev \cite{Kas95, Kas97} used a different type of $6j$-symbols, involving the discrete quantum dilogarithm, to define for each integer $n$ complex valued link invariants. He observed in a few examples, and conjectured in the general case, that the absolute value of these invariants  grow exponentially with $n$, and that the growth rate is given by the hyperbolic volume of the complement of the link. In \cite{MM01}, Murakami and Murakami showed that Kashaev's invariants coincide with the values of the colored Jones polynomials at a certain root of unity, and reformulated Kashaev's conjecture as follows.

\theoremstyle{thm}
\newtheorem*{VolConj}{Volume Conjecture}

\begin{VolConj}
[\cite{Kas97,MM01}] \label{KMM}
For a hyperbolic link $L$ in $S^3$, let $J_n(L;q)$ be its $n$-th colored Jones polynomial. Then
$$\lim_{n\to+\infty}\frac{2\pi}{n}\log\big|J_n(L;e^{\frac{2\pi\sqrt{-1}}{n}})\big|=\mathrm{vol}(S^3\setminus L),$$
where $\mathrm{vol}(S^3\setminus L)$ is the hyperbolic volume of the complement of $L$.
\end{VolConj}

This conjecture has now been proved for a certain number of cases:  the figure-eight knot  \cite{MM01}; all hyperbolic knots with at most six crossings \cite{Oht1, Oht2}; the Borromean rings \cite{GL05}; the twisted Whitehead links \cite{Zhe07}; the Whitehead chains \cite{Vee08}. 
Various extensions of this conjecture have been proposed, and proved for certain cases in \cite{Guk05, MY07, Cos07',CGP, CGV, Mur13}.  


In the current paper we investigate the asymptotic behavior of the Reshetikhin-Turaev and the Turaev-Viro invariants, evaluated at the root of unity $q=e^{\frac{2\pi\sqrt{-1}}{r}}$. Supported by numerical  evidence, we propose the following conjecture.

\begin{conjecture}\label{VC} 
For a hyperbolic $3$-manifold $M,$ let $\mathrm{TV}_{r}(M; q)$ be its Turaev-Viro invariant and let $\mathrm{vol}(M)$ be its hyperbolic volume. Then for $r$ running over all odd integers and for $q=e^{\frac{2\pi\sqrt{-1}}{r}}$,$$\lim_{r\to+\infty}\frac{2\pi}{r}\log\big( \mathrm{TV}_{r}(M; q)\big)=\mathrm{vol}(M).$$
\end{conjecture}

 We here consider all types of hyperbolic $3$-manifolds: closed, cusped or those with totally geodesic boundary. The Turaev-Viro invariant  $\mathrm{TV}_{r}(M; q)$ is the original one defined in \cite{TV92} when the manifold $M$ is closed, and is its extension defined in \cite{BP} when $M$ has non-empty boundary. 
See \S \ref{definition} for  details. 

This conjecture should be contrasted with  Witten's Asymptotic Expansion Conjecture (see \cite{Oht01}) which predicts that, when evaluated at  $q=e^{\frac{\pi\sqrt{-1}}{r}}$,  the Witten invariants of a 3-manifold (and therefore its Reshetikhin-Turaev and  Turaev-Viro invariants) only grow  polynomially, with a growth rate related to  classical invariants of the manifold such as the Chern-Simons invariant  and the Reidemeister torsion.

Conjecture \ref{VC} is motivated by the beautiful work of Costantino \cite{Cos07} relating the asymptotics of quantum $6j$-symbols to the volumes of truncated hyperideal tetrahedra. See also \cite{CM, CGV}. 

We provide much supporting evidence for Conjecture \ref{VC}. In \S \ref{conjecture}, we numerically calculate $\mathrm{TV}_r(M)$ for various hyperbolic $3$-manifolds with cusps, including the figure-eight knot complement and its sister, the complements of the knots $K_{5_2}$ and $K_{6_1}$, and  the manifolds denoted by  $M_{3_6}$, $M_{3_8}$, $N_{1_1}$ and $N_{2_1}$ in the Callahan-Hildebrand-Weeks census \cite{CHW99}. We also numerically calculate $\mathrm{TV}_r(M)$  for the smallest hyperbolic $3$-manifolds with a totally geodesic boundary \cite{Fuj90, KM91}. 

Recently, Detcherry, Kalfagianni and the second author \cite{DKY} provided a rigorous proof of Conjecture \ref{VC} for the figure-eight knot complement.

\medskip

The Reshetikhin-Turaev invariants $\mathrm{RT}_r(M;q)$ are complex valued invariants of a closed oriented  $3$-manifold $M$, defined for all integers $r\geqslant 3$ and  all primitive $2r$-th roots of unity $q$.  For $q=e^{\frac{\pi\sqrt{-1}}{r}}$,  these invariants provide a mathematical realization of Witten's invariants \cite{Wit89}. Following a skein theory approach pioneered by Lickorish \cite{Li91, Li92},  Blanchet-Habegger-Masbaum-Vogel \cite{BHMV} (see also Lickorish \cite{Li}) extended Reshetikhin-Turaev invariants to  primitive $r$-th roots of unity $q$ with $r$ odd. In particular, $\mathrm{RT}_r(M;q)$ is defined at $q=e^{\frac{2\pi\sqrt{-1}}{r}}$ when $r$ is odd. In \S \ref{closed}, we numerically compute  Reshetikhin-Turaev invariants for various  closed  hyperbolic $3$-manifolds obtained by  integral Dehn surgery along the knots $K_{4_1}$ and $K_{5_2}$. These calculations suggest the following conjecture. 

\begin{conjecture}\label{RTVC} Let $M$ be a closed oriented hyperbolic $3$-manifold and let $\mathrm{RT}_r(M;q)$ be its Reshetikhin-Turaev invariants. Then for $q=e^{\frac{2\pi\sqrt{-1}}{r}}$ with $r$ odd and for a suitable choice of the arguments, $$\lim_{r\to+\infty}\frac{4\pi \sqrt{-1}}{r}\log\big( \mathrm{RT}_{r}(M; q)\big)= \mathrm{CS}(M)+\mathrm{vol}(M)\sqrt{-1} \mod \pi^2\mathbb Z,$$
where $\mathrm{CS}(M)$ denotes the  Chern-Simons invariant of the hyperbolic metric of $M$ multiplied by~$2\pi^2$.
\end{conjecture}

Ohtsuki  \cite{Oht3} recently announced a proof of Conjecture \ref{RTVC} for the manifolds obtained by Dehn surgery along the knot $K_{4_1}$. By \cite{Ro, Tur10, W},  Conjecture \ref{RTVC} implies Conjecture \ref{VC} for  closed $3$-manifolds.
 
 Comparing Conjecture \ref{RTVC} with Witten's Asymptotic Expansion Conjecture, one sees a very different asymptotic behavior for the Reshetikhin-Turaev invariants evaluated at $q=e^{\frac{2\pi\sqrt{-1}}{r}}$ and $q=e^{\frac{\pi\sqrt{-1}}{2r}}$. Our numerical calculations  also suggest exponential growth at other roots of unity such as $q=e^{\frac{3\pi\sqrt{-1}}{r}}$. For these roots of unity, we expect a geometric interpretation of Reshetikhin-Turaev invariants that is different from  the $SU(2)$ Chern-Simons gauge theory. 

\medskip

In \S \ref{unknot}, we calculate $\mathrm{TV}_r(M)$ for the complements of the unknot,  the Hopf link, the trefoil knot and the torus links $T_{(2,4)}$ and $T_{(2,6)}$. 
We also numerically calculate $\mathrm{TV}_r(M)$ for the complement of the torus knots $T_{(2,5)}$, $T_{(2,7)}$, $T_{(2,9)}$, $T_{(2,11)}$, $T_{(3,5)}$ and $T_{(3,7)}$. These computations suggest an Integrality Conjecture (Conjecture \ref{int}) which states that the Turaev-Viro invariants of  torus link complement are integers independent of the roots of unity at which they are evaluated. 
\\

\noindent\textbf{Acknowledgments:} Part of this work was done during the Tenth East Asian School of Knots and Related Topics at the East China Normal University in January 2015. We would like to thank the organizers for their support and hospitality.

The authors are deeply grateful to Francis Bonahon for discussions, suggestions and improving the writing of the paper, and are also grateful to Riccardo Benedetti, Francesco Costantino, Charles Frohman, Stavros Garoufalidis,  Rinat Kashaev,  Liang Kong, Thang L\^e,  Julien March\'e, Gregor Masbaum, Nicolai Reshetikhin, Dylan Thurston, Roland van der Veen, Zhenghan Wang and Hao Zheng for discussions and suggestions, and  Xiaogang Wen and Shing-Tung Yau for showing interest in this work. The first author thanks Kefeng Liu and Weiping Zhang for many useful discussions during the past few years, and  Nicolai Reshetikhin for guiding him to the area of quantum invariants and sharing many experiences and ideas since 2005. The second author thanks Henry Segerman and Hongbin Sun for very helpful discussions, and Steven Kerckhoff, Feng Luo and Maryam Mirzakhani  for  many suggestions.


\section{Preliminaries}

We recall the construction of Turaev-Viro invariants of 3-dimensional manifolds. In order to follow a uniform treatment for all cases, we extend their definition to pseudo $3$-manifolds. 

\subsection{Pseudo $3$-manifolds and triangulations}\label{ideal}

A \emph{pseudo $3$-manifold} is a topological space $M$ such that each point $p$ of $M$ has a neighborhood $U_p$ that is homeomorphic to a cone over a surface $\Sigma_p$. We call $p$ a \emph{singular point} and $U_p$ a \emph{singular neighborhood} if $\Sigma_p$ is not a $2$-sphere. In particular, a closed $3$-manifold is a pseudo $3$-manifold with no singular point, and every $3$-manifold with boundary is homeomorpic to a pseudo $3$-manifold with suitable singular neighborhoods of all singular points removed.

A \emph{triangulation} $\mathcal{T}$ of a pseudo manifold $M$ consists of a disjoint union $X=\bigsqcup \Delta_i$ of finitely many  Euclidean tetrahedra $\Delta_i$ and of a collection of  homeomorphisms $\Phi$ between pairs of  faces in $X$ such that the quotient space $X/\Phi$ is homeomorphic to $M$. The \emph{vertices}, \emph{edges}, \emph{faces} and \emph{tetrahedra} in $\mathcal{T}$ are respectively the quotients of the vertices, edges, faces and tetrahedra in $X$.  From the definition, we see that a singular point of $M$ must be a vertex of $\T$. We call the non-singular vertices of $\T$ the \emph{inner vertices}. If $M$ is a closed $3$-manifold, then a triangulation of $M$ is a triangulation of manifold in the usual sense; and if $N$ is a $3$-manifold with boundary obtained by removing all singular neighborhoods of a pseudo $3$-manifold $M$, then a triangulation of $M$ without inner vertices determines an ideal triangulation of $N$.

In \cite{Mat87, Mat07, Pie88}, it is proved that any two triangulations of a pseudo $3$-manifold are related by a sequence of 0--2 and 2--3 Pachner moves. See the figure below, where in the $0-2$ move a new inner vertex is introduced.

\centerline{\includegraphics[width=14cm]{moves}}


\subsection{Quantum $6j$-symbols}
\label{q6j}

We now recall the definition and basic properties of the quantum $6j$-symbols. See \cite{KR88, KL94} for more details.

Throughout this subsection, we will fix an integer $r\geqslant 3$, and we let $I_r=\{0,1/2,\dots, (r-2)/2\}$ be the set of non-negative half-integers less than or equal to $(r-2)/2$.  The elements of $I_r$ are traditionally called \emph{colors}. 

Let $q\in \mathbb C$ be a root of unity such that $q^2$ is a primitive root of unity of order $r$. For an integer $n$, the \emph{quantum integer} $[n]$ is the real number defined by
$$[n]=\frac{q^n-q^{-n}}{q-q^{-1}},$$ 
and the associated \emph{quantum factorial} is
$[n]!=[n][n-1]\dots[1]$. By convention, $[0]!=1$.

A triple $(i,j,k)$ of elements of $I_r$ is called \emph{admissible} if 
\begin{enumerate}
\item  $i+j\geqslant k$, $j+k\geqslant i$ and $k+i\geqslant j$, \item  $i+j+k\in \mathbb Z$, 
\item $i+j+k\leqslant r-2$. 
\end{enumerate}
A  6-tuple $(i,j,k,l,m,n)$ of elements of $I_r$ is \emph{admissible}  if the triples $(i,j,k)$, $(j,l,n)$, $(i,m,n)$ and $(k,l,m)$ are admissible

For an admissible triple $(i,j,k)$, define 
$$\Delta(i,j,k)=\sqrt{\frac{[i+j-k]![j+k-i]![k+i-j]!}{[i+j+k+1]!}}$$
with the convention that $\sqrt{x}=\sqrt{|x|}\sqrt{-1}$ when the real number $x$ is negative.

\begin{definition}
The \emph{quantum $6j$-symbol} of an admissible 6-tuple $(i,j,k,l,m,n)$ is the number
\begin{multline*}
\bigg|\begin{matrix} i & j & k \\l & m & n \end{matrix} \bigg|
= \sqrt{-1}^{-2(i+j+k+l+m+n)}\Delta(i,j,k)\Delta(j,l,n)\Delta(i,m,n)\Delta(k,l,m)\\
\sum_{z=\max \{T_1, T_2, T_3, T_4\}}^{\min\{ Q_1,Q_2,Q_3\}}\frac{(-1)^z[z+1]!}{[z-T_1]![z-T_2]![z-T_3]![z-T_4]![Q_1-z]![Q_2-z]![Q_3-z]!}
\end{multline*}
where $T_1=i+j+k$, $T_2=j+l+n$, $T_3=i+m+n$ and $T_4=k+l+m$, $Q_1=i+j+l+m$, $Q_2=i+k+l+n$ and $Q_3=j+k+m+n$.
\end{definition}

A good way to memorize the definitions is to consider a tetrahedron as in  Figure~\ref{fig:tetra}, and to attach the weights $i$, $j$, $k$, $l$, $m$, $n$ to its edges as indicated in the figure. Then each of $T_1$, $T_2$, $T_3$, $T_4$ corresponds to a face of the tetrahedron, and each of $Q_1$, $Q_2$, $Q_3$ corresponds to a quadrilateral separating two pairs of the vertices. 

\begin{figure}[htbp]

\includegraphics[width=4cm]{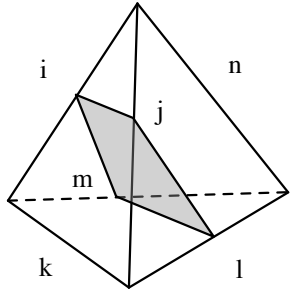}
\caption{$6j$-symbols and the tetrahedron}
\label{fig:tetra}
\end{figure}

The following symmetries
\begin{equation*}
\bigg|
\begin{matrix}
        i & j & k \\
        l & m & n 
      \end{matrix} \bigg|
      =\bigg|
      \begin{matrix}
        j & i & k \\
        m & l & n 
      \end{matrix} \bigg|=\bigg|
      \begin{matrix}
        i & k & j \\
        l & n & m 
      \end{matrix} \bigg|
      =\bigg|
      \begin{matrix}
        i & m & n \\
        l & j & k 
      \end{matrix} \bigg|=\bigg|
      \begin{matrix}
        l & m & k \\
        i & j & n
      \end{matrix} \bigg|=\bigg|
      \begin{matrix}
        l & j & n \\
       i & m & k 
      \end{matrix} \bigg|
\end{equation*}  
immediately follow from the definitions.

The quantum $6j$-symbols satisfy the following two important identities, which are crucial in the construction of the Turaev-Viro invariants. For $i\in I_r$, set
$$w_i=(-1)^{2i}[2i+1]\qquad
\text{ and }\qquad
\eta=\sum_{i\in I_r}w_i^2.$$

\begin{proposition} [Orthogonality Property] For any admissible $6$-tuple $(i,j,k,l,m,n)$, 
\begin{equation}\label{O}
\sum_sw_sw_m\bigg|
      \begin{matrix}
        i & j & m\\
        k & l & s 
      \end{matrix} \bigg|\bigg|
      \begin{matrix}
        i & j & n \\
        k & l & s 
      \end{matrix} \bigg|=\delta_{mn},
      \end{equation}
where $\delta$ is the Kronecker symbol, and where the sum is over all $s\in I_r$ such that the two $6$-tuples  in the sum are admissible. \qed
\end{proposition} 

\begin{corollary}\label{C}
For any admissible triple $(i,j,k)$,
\begin{equation}
\eta^{-1}\sum_{l,m,n}w_lw_mw_n\bigg|
      \begin{matrix}
        i & j & k\\
        l & m & n 
      \end{matrix} \bigg|\bigg|
      \begin{matrix}
        i & j & k \\
        l & m & n 
      \end{matrix} \bigg|=1,
      \end{equation}
where the sum is over $l,m,n\in I_r$ such that the $6$-tuples $(i,j,k,l,m,n)$ is admissible. \qed

\end{corollary}

\begin{proposition} [Biedenharn-Elliot identity] For any $i$, $j$, $k$, $l$, $m$, $n$, $o$, $p$, $q\in I_r$ such that $(o,p,q,i,j,k)$ and $(o,p,q,l,m,n)$ are admissible, 

\begin{equation}\label{BE}
\sum_sw_s\bigg|
      \begin{matrix}
        i & j & q \\
        m & l & s 
      \end{matrix} \bigg|\bigg|
      \begin{matrix}
        j & k & o \\
        n & m & s 
      \end{matrix} \bigg|\bigg|
      \begin{matrix}
        k & i & p \\
        l & n & s 
      \end{matrix} \bigg|=\bigg|
      \begin{matrix}
        o & p & q \\
        i & j & k 
      \end{matrix} \bigg|\bigg|
      \begin{matrix}
        o & p & q \\
        l & m & n 
      \end{matrix} \bigg|,
      \end{equation}
where the sum is over $s\in I_r$ such that the three $6$-tuples in the sum are admissible. \qed

\end{proposition}


\subsection{Turaev-Viro invariants of pseudo $3$-manifolds}\label{definition}
Let $q$ be a root of unity, and let $r$ be such that $q^2$ is a primitive root of unity of order $r$. As in \S \ref{q6j}, we consider the set $I_r=\{0,1/2,1,\dots,(r-2)/2\}$ of colors, and the notation
$$
[n] = \frac {q^n - q^{-n}}{q-q^{-1}},
\qquad
w_i=(-1)^{2i}[2i+1],
\qquad
\eta=\sum_{i\in I_r}w_i^2.
$$
for every integer $n$ and color $i \in I_r$.

For a triangulation $\mathcal T$ of a  pseudo $3$-manifold $M$, an \emph{$r$-admissible coloring} of $(M,\mathcal T)$ is a map
$$
c \colon \{\text{edges of } \mathcal T\} \to I_r
$$
such that, for every 2-dimensional face $F$ of $\mathcal T$, the colors $c(e_1)$, $c(e_2)$, $c(e_3) \in \mathcal T$ associated to the  edges of $F$ form an admissible triple. Such a coloring $c$ associates to each edge $e$ of $\mathcal T$ the number 
$$|e|_c= w_{c(e)},$$
and to each tetrahedron $\Delta$ of $\mathcal T$ the $6j$-symbol 
\begin{equation*}
|\Delta|_c=\bigg|
      \begin{matrix}
        c(e_{12}) & c(e_{13}) & c(e_{23}) \\
       c(e_{34}) & c(e_{24}) & c(e_{14}) \\
      \end{matrix} \bigg|,
\end{equation*}
where the edges of $\Delta$ are indexed in such a way that, if $v_1$, $v_2$, $v_3$, $v_4$ denote the vertices of $\Delta$, the edge $e_{ij}$ connects $v_i$ to $v_j$.

\begin{definition}\label{TV} With the above definitions,  the \emph{Turaev-Viro invariant} of $M$ associated to the root of unity $q$ is defined as the sum
 $$\mathrm{TV}_q(M,\T)=\eta^{-|V|} \sum_{c\in A_r}\prod_{e\in E}|e|_c\prod_{\Delta\in T}|\Delta|_c$$
 where $V$, $E$, $T$, $A_r$ respectively denote the sets of inner vertices, edges, tetrahedra and $r$-admissible colorings of the triangulation $\mathcal T$.
\end{definition}

\begin{theorem}\label{TVinvariance}
The above invariant $\mathrm{TV}_q(M,\T)$ depends only on the pseudo-manifold $M$ and on the root of unity $q$, not on the triangulation $\mathcal T$. 
\end{theorem}

\begin{proof} Theorem~\ref{TVinvariance} is proved by a straightforward extension to pseudo 3-manifolds of the original argument of Turaev and Viro in  \cite{TV92} for 3-manifolds. 

The first ingredient is a purely topological statement, proved in  \cite{Mat87, Mat07, Pie88}, which says  that any two triangulations of a pseudo $3$-manifold are related by a sequence of the Pachner Moves 0--2 and 2--3 represented in Figures~\ref{fig:02} and \ref{fig:23}. The Pachner Move 0--2 replaces a 2-dimensional face of the triangulation by two tetrahedra meeting along 3 faces, and adds one vertex to the triangulation. The 2--3 Move replaces two tetrahedra meeting  along one face by three tetrahedra sharing one edge. 

\begin{figure}[htbp]

\includegraphics[width=8.5cm]{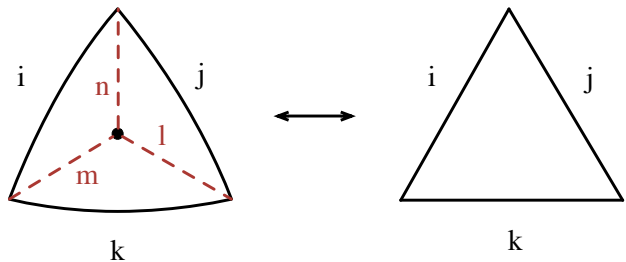}

\caption{The Pachner Move 0--2}
\label{fig:02}
\end{figure}
\begin{figure}[htbp]
\includegraphics[width=8.5cm]{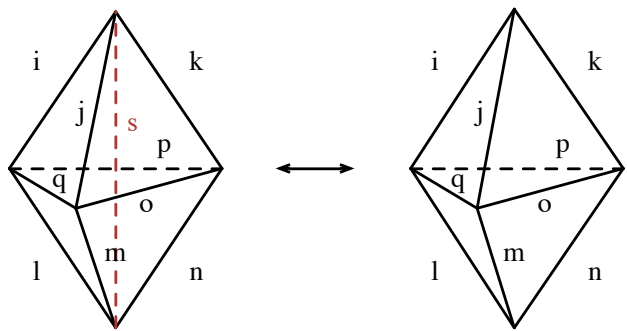}
\caption{The Pachner Mover 2--3}
\label{fig:23}
\end{figure}

The second ingredient is algebraic, and is provided by the properties of $6j$-symbols given in \S \ref{q6j}. Indeed, exactly as in \cite{TV92}, Corollary \ref{C} of the Orthogonality Property of Proposition~\ref{O} shows that $\mathrm{TV}_r(M,\T)$ is unchanged as we modify the triangulation $\mathcal T$ by a  0--2 move, and the Biedenharn-Elliot identity (\ref{BE}) guarantees the invariance under the 2--3 move. 
\end{proof}

As mentioned in \S \ref{ideal}, a triangulation of a pseudo $3$-manifold without inner vertices determines an ideal triangulation of the $3$-manifold with boundary obtained by removing all the singular neighborhoods. Hence for a $3$-manifold $M$ with boundary, one can define $\mathrm{TV}_r(M)$ using an ideal triangulation of $M$. Our invariant (and its construction) then coincides with the one defined in \cite{BP} using o-graphs.

Theorem \ref{TVinvariance} shows that, for any $r$ and $q$ as above,  $\mathrm{TV}_q(M,\T)$ is  independent of the choice of of the triangulation $\mathcal T$. We will consequently   omit the triangulation $\T$ and denote the invariant by $\mathrm{TV}_r(M)$ if $q=e^{\frac{2\pi\sqrt{-1}}{r}}$, or by $\mathrm{TV}_r(M; q)$ if we want to emphasize which root of unity $q$ is being used.


\section{Evidence for $3$-manifolds with boundary}\label{conjecture}

We now provide numerical evidence for Conjecture \ref{VC} for a few hyperbolic $3$-manifolds with boundary. The closed manifold case will be considered in the next section. The reason for considering the two cases separately is that a hyperbolic $3$-manifold with boundary often admits an  ideal triangulation by a small number of tetrahedra, whereas a triangulation of a closed hyperbolic $3$-manifold usually requires more tetrahedra. For example, it takes at least nine tetrahedra to triangulate the Weeks manifold, which is the smallest closed hyperbolic $3$-manifold.

To simplify the notation, set 
$$QV_r(M)=\frac{2\pi}{r-2}\log\Big(\mathrm{TV}_r(M;e^{\frac{2\pi\sqrt{-1}}{r}})\Big)$$ 
for each odd integer $r\geqslant 3$. Similarly, write
$$\mathrm{TV}_r(L)=\mathrm{TV}_r(S^3\setminus L)\quad\text{and}\quad QV_r(L)=QV_r(S^3\setminus L)$$
when $M=S^3\setminus L$ is a link complement. 

\subsection{The figure-eight knot complement and its sister}\label{41}

\bigskip

By Thurston's famous construction \cite{Thu82}, the figure-eight knot complement $S^3\setminus K_{4_1}$ has volume 
$$\mathrm{vol}(S^3\setminus K_{4_1})\approx2.02988,$$
and has the  ideal triangulation represented  in Figure~\ref{fig:FigureEight}. In that figure, edges with the same labels ($a$ or $b$) are glued together following the indicated orientations  to form an edge of the ideal triangulation.

\begin{figure}[htbp]

\includegraphics[width=8cm]{figure-8}

\caption{}
\label{fig:FigureEight}
\end{figure}

 By Definition \ref{TV}, we have 
\begin{equation*}
\mathrm{TV}_r(K_{4_1})=\sum_{(a,b)\in A_r} w_aw_b
\bigg|
      \begin{matrix}
        a & a & b \\
        a & b & b \\
      \end{matrix} \bigg|
      \bigg|
      \begin{matrix}
        a & a & b \\
        a & b & b \\
      \end{matrix} \bigg|,
      \end{equation*}
where $A_r$ consists of the pairs $(a,b)$ of elements of $I_r$ such that $(a,a,b)$ and $(b,b,a)$ are admissible, i.e., $2a-b\geqslant 0$, $2b-a\geqslant 0$, $2a+b\leqslant r-2$, $2b+a\leqslant r-2$ and $2a+b$ and $2b+a$ are integers. From this formula, we have the following table of values of $QV_r(K_{4_1})$. \bigskip

\centerline{\tiny\begin{tabular}{|c||c|c|c|c|c|c|c|c|c|} 
\hline
 r &   $11$ & $13$ & $15$ & $17$ & $19$ & $21$ & $23$ & $25$ & $31$  \\ 
\hline
&&&&&&&&&\\
\ \ $QV_r(K_{4_1})$ \ \ & \ \  $2.40661$  \ \   & \ \ $2.37755$ \ \ &\ \ $2.34826$ \ \ &\ \ $2.31907$ \ \ &\ \  $2.29953$ \ \ & \ \ $2.28227$ \ \ &\ \  $2.26834$ \ \ &\ \  $2.25634$ \ \   & \ \ $2.22824$ \ \  \\
\hline
\end{tabular}}
\smallskip

\centerline{\tiny\begin{tabular}{|c||c|c|c|c|c|c|c|c|c|} 
\hline
$ r$  &$41$ & $51$ & $61$ & $71$ & $81$ & $91$ & $101$  &  $111$ & $121$     \\ 
\hline
&&&&&&&&&\\
\ \ $QV_r(K_{4_1})$ \ \ & \ \  $2.19685$ \ \ & \ \ $2.17540$ \ \   &   \ \ $2.15953$ \ \ & \ \ $2.14721$  \ \  & \ \ $2.13731$ \ \ & \ \ $2.12915$ \ \ & \ \ $2.12230$  \ \ & \ \ $2.11643$ \ \ & \ \ $2.11136$  \ \ \\
\hline
\end{tabular}}
\smallskip

\centerline{\tiny\begin{tabular}{|c||c|c|c|c|c|c|c|c|c|} 
\hline
$ r$   & $131$ & $141$ & $151$ & $201$ & $301$ & $401$ & $501$  &  $701$ & $1001$    \\ 
\hline
&&&&&&&&&\\
\ \ $QV_r(K_{4_1})$ \ \ & \ \ $2.10692$ \ \ & \ \ $2.10299$ \ \   &   \ \ $2.09949$ \ \ & \ \ $2.08641$  \ \  & \ \ $2.07168$ \ \ & \ \ $2.06344$ \ \ & \ \ $2.05810$  \ \ & \ \ $2.05153$ \ \ & \ \ $2.04614$ \ \ \\
\hline
\end{tabular}}
\bigskip

Figure \ref{Fig1} below  compares the values of the Turaev-Viro invariants  $QV_r(K_{4_1})$ and the Kashaev invariants $\langle K_{4_1}\rangle_r$ for various values of $r$. The dots represent the points $(r,QV_r(K_{4_1}))$, the diamonds represent the points $(r, \frac{2\pi}{r}\log|\langle K_{4_1}\rangle_r|)$, and the squares represent the points $(r, \mathrm{vol}(S^3\setminus K_{4_1}))$. Note that the values of $QV_r(K_{4_1})$ appear to converge to $\mathrm{vol}(S^3\setminus K_{4_1})$ much faster than $\langle K_{4_1}\rangle_r$ as $r$ becomes large.

\begin{figure}[htbp]
\centering
\includegraphics[scale=0.6]{4_1Kashaevbig.jpg}
\caption{Comparison of different invariants for $K_{4_1}$}
\label{Fig1}
\end{figure}

The manifold $M_{2_2}$ in the Callahan-Hildebrand-Weeks census \cite{CHW99}, also known as the figure-eight sister, shares the same volume with the figure-eight knot complement, i.e.,
$$\mathrm{vol}(M_{2_2})=\mathrm{vol}(S^3\setminus K_{4_1})\approx2.02988.$$
It is also known that $M_{2_2}$ is not the complement of any knot in $S^3$. According to Regina \cite{Reg}, $M_{2_2}$ has the  ideal triangulation represented in Figure~\ref{fig:FigEightSister}.

\begin{figure}[htbp]

\includegraphics[width=8cm]{figure-8-sister}

\caption{}
\label{fig:FigEightSister}
\end{figure}

Since for each tetrahedron in this triangulation, the coloring is the same as that of $S^3\setminus K_{4_1}$, the invariant $\mathrm{TV}_r(M_{2_2})$ has exactly the same formula as $\mathrm{TV}_r(K_{4_1})$. As a consequence, the Turaev-Viro invariants of these manifolds take the same values.


\subsection{The $K_{5_2}$ knot complement and its sisters}\label{52}

According to SnapPy \cite{Sna} and Regina \cite{Reg}, the complement of the knot $K_{5_2}$  has volume $$\mathrm{vol}(S^3\setminus K_{5_2})\approx2.82812,$$ and admits the  ideal triangulation represented in Figure~\ref{fig:K52}. Since only the colors of the edges (according to which the edges are identified to form an edge of the triangulation) matters in the calculation of $\mathrm{TV}_r(M)$, we omit the arrows on the edges.
\begin{figure}[htbp]
\includegraphics[width=12cm]{5_2}

\caption{}
\label{fig:K52}
\end{figure}

By Definition \ref{TV}, we have
\begin{equation*}
\mathrm{TV}_r(K_{5_2})=\sum_{(a,b,c)\in A_r} w_aw_bw_c
\bigg|
      \begin{matrix}
        a & a & b \\
        b & c & c \\
      \end{matrix} \bigg|\bigg|
      \begin{matrix}
        a & a & b \\
        b & c & c \\
      \end{matrix} \bigg|
      \bigg|
      \begin{matrix}
        a & b & c \\
        b & b & c \\
      \end{matrix} \bigg|,
      \end{equation*}
where $A_r$ consists of triples $(a,b,c)$ of elements of $I_r$ such that $(a,a,b)$, $(b,b,c)$, $(c,c,a)$ and $(a,b,c)$ are admissible. From this, we have the following table of values of $QV_r(K_{5_2})$. 
\bigskip

\centerline{\tiny\begin{tabular}{|c||c|c|c|c|c|c|c|c|} 
\hline
 r &$7$ & $9$ & $11$ & $21$ & $31$ & $41$   & $51$  & $61$ \\ 
\hline
&&&&&&&&\\
\ \ $QV_r(K_{5_2})$ \ \ & \ \ $3.38531$ \ \ &\ \ $3.32394$ \ \ &\ \ $3.25282$ \ \ &\ \  $3.09588$ \ \ & \ \ $3.03657$ \ \ &\ \  $3.00236$ \ \  &\ \  $2.97925$ \ \  &  \ \ $2.96232$ \ \  \\
\hline
\end{tabular}}
\smallskip

\centerline{\tiny\begin{tabular}{|c||c|c|c|c|c|c|c|c|} 
\hline
$ r$ & $71$ & $81$ & $91$ & $101$ & $121$ & $151$ & $201$  & $301$ \\ 
\hline
&&&&&&&&\\
\ \ $QV_r(K_{5_2})$ \ \ & \ \ $2.94927$ \ \ & \ \ $2.93883$ \ \ & \ \ $2.93027$ \ \ & \ \ $2.92309$ \ \  & \ \ $2.91169$ \ \ & \ \ $2.89937$ \ \ & \ \ $2.88586$  \ \ & \ \ $2.87071$  \ \   \\
\hline
\end{tabular}}
\bigskip

Figure \ref{Fig2}   compares the values of $QV_r(K_{5_2})$ with those of the Kashaev invariants $\langle K_{5_2}\rangle_r$. Again,  the dots represent the points $(r,QV_r(K_{5_2}))$,  the diamonds represent the points $(r, \frac{2\pi}{r}\log|\langle K_{5_2}\rangle_r|)$, and the squares represent the points $(r, \mathrm{vol}(S^3\setminus K_{5_2}))$.

\begin{figure}[htbp]
\centering
\includegraphics[scale=0.6]{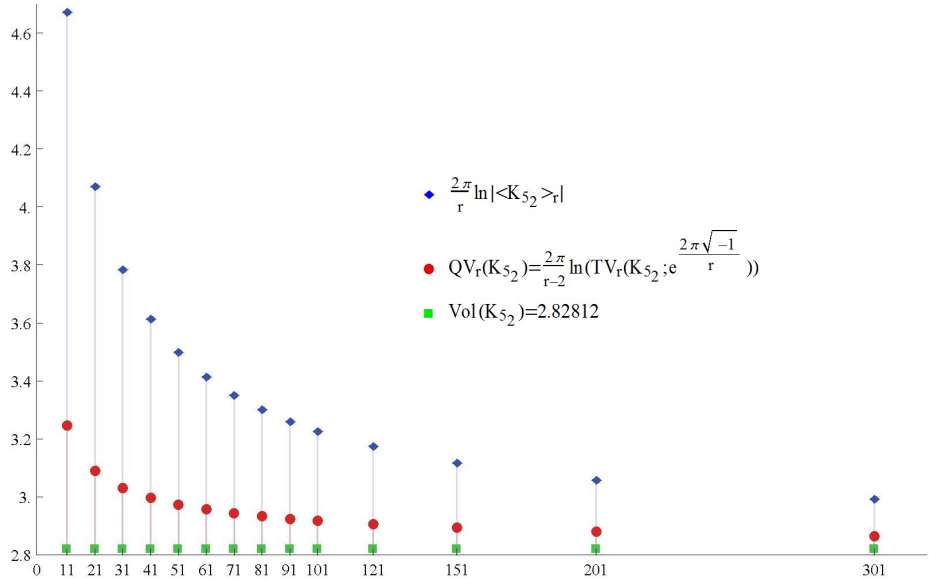}
\caption{Comparison of different invariants for $K_{5_2}$}
\label{Fig2}
\end{figure}

The manifold $M_{3_6}$ in the Callahan-Hildebrand-WeeksÕ census \cite{CHW99} is also the complement of the  $(-2,3,7)-$pretzel knot of Figure~\ref{fig:M36}. It has the same volume as $S^3\setminus K_{5_2}$, namely $$\mathrm{vol}(M_{3_6})=\mathrm{vol}(S^3\setminus K_{5_2})\approx2.82812.
$$

\begin{figure}[htbp]

\includegraphics[width=4cm]{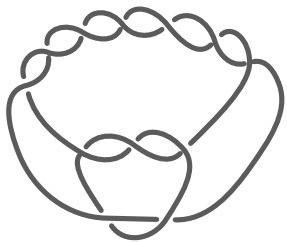}
\caption{The $(-2,3,7)-$pretzel knot}
\label{fig:M36}
\end{figure}

 According to Regina, $M_{3_6}$ can be represented by the ideal triangulation of Figure \ref{fig:M36Triang}. 

\begin{figure}[htbp]
\includegraphics[width=12cm]{M3_6}

\caption{}
\label{fig:M36Triang}
\end{figure}

Then for $r\geqslant 3$, we have
\begin{equation*}
\mathrm{TV}_r(M_{3_6})=\sum_{(a,b,c)\in A_r} w_aw_bw_c
\bigg|
      \begin{matrix}
        a & a & b \\
        b & c & c \\
      \end{matrix} \bigg|\bigg|
      \begin{matrix}
        a & a & b \\
        b & c & c \\
      \end{matrix} \bigg|
      \bigg|
      \begin{matrix}
        a & b & c \\
        a & a & c \\
      \end{matrix} \bigg|,
      \end{equation*}
where $A_r$ consists of all triples $(a,b,c)$ of elements of $I_r$ such that $(a,a,b)$, $(b,b,c)$, $(c,c,a)$, $(a,b,c)$ and $(a,a,c)$ are admissible.  

The table below shows a few values of $QV_r(K_{5_2})$ and $QV_r(M_{3_6})$. We observe that  $QV_r(K_{5_2})$ and $QV_r(M_{3_6})$ are distinct, but are getting closer to each other and seem to converge to $2.82812$ as $r$ grows.
 
 \medskip
\centerline{\tiny\begin{tabular}{|c||c|c|c|} 
\hline
 r & $9$ & $11$ & $21$  \\ 
\hline
&&&\\
\ \ $QV_r(K_{5_2})$ \ \ & \ \ $3.3239396087031623282$ \ \  & \ \ $3.2528240712684816477$ \ \  & \ \ $3.0958786489268195966$ \ \    \\
\hline
&&&\\
\ \ $QV_r(M_{3_6})$ \ \ & \ \ $3.2936286562299185780$ \ \  & \ \ $3.2291939333749922011$ \ \  & \ \ $3.0954357480831343159$ \ \    \\
\hline
\end{tabular}}
\smallskip

\centerline{\tiny\begin{tabular}{|c||c|c|c|} 
\hline
 r & $31$ & $51$ & $101$  \\ 
\hline
&&&\\
\ \ $QV_r(K_{5_2})$ \ \ & \ \ $3.0365668215995635907$ \ \  & \ \ $2.9792536251826401549$ \ \  & \ \ $2.9230944207585713174$ \ \    \\
\hline
&&&\\
\ \ $QV_r(M_{3_6})$ \ \ & \ \ $3.0365081953458580040$ \ \  & \ \ $2.9792532229139281449$ \ \  & \ \ $2.9230944207610719723$ \ \    \\
\hline
\end{tabular}}
\bigskip

\medskip

The manifold $M_{3_8}$ in the Callahan-Hildebrand-Weeks census \cite{CHW99}, which is not the complement of any knot in $S^3$, also
has the same volume as $S^3\setminus K_{5_2}$. According to Regina, $M_{3_8}$ has an ideal triangulation  that has the same colors as that of $S^3\setminus K_{5_2}$ drawn above. As a consequence, $QV_r(M_{3_8})$ coincides with $QV_r(K_{5_2})$  for all $r\geqslant 3$.


\subsection{The $K_{6_1}$ knot complement}\label{61}

According to SnapPy \cite{Sna} and Regina \cite{Reg}, the complement of the knot $K_{6_1}$  has volume $$\mathrm{vol}(S^3\setminus K_{6_1})\approx3.16396,$$ and can be described by the ideal triangulation of Figure \ref{fig:K61}.

\begin{figure}[htbp]
\includegraphics[width=15cm]{6_1}

\caption{}
\label{fig:K61}
\end{figure}

This gives
\begin{equation*}
\mathrm{TV}_r(K_{6_1})=\sum_{(a,b,c,d)\in A_r} w_aw_bw_cw_d
\bigg|
      \begin{matrix}
        a & a & b \\
        a & d & b \\
      \end{matrix} \bigg|\bigg|
      \begin{matrix}
        a & c & c \\
        b & b & d \\
      \end{matrix} \bigg|\bigg|
      \begin{matrix}
        b & b & c \\
        a & c & d \\
      \end{matrix} \bigg|
      \bigg|
      \begin{matrix}
        b & b & c \\
        b & d & c \\
      \end{matrix} \bigg|,
      \end{equation*}
      where $A_r$ consists of quadruples $(a,b,c,d)$ of elements of $I_r$ such that all the triples involved are admissible. From this, we have the following table of values of $QV_r(K_{6_1})$. 
\bigskip

\centerline{\tiny\begin{tabular}{|c||c|c|c|c|c|c|c|c|} 
\hline
 r & $5$ & $7$ & $9$ & $11$ & $21$ & $31$ & $41$   & $51$  \\ 
\hline
&&&&&&&&\\
\ \ $QV_r(K_{6_1})$ \ \ & \ \  $3.83348$  \ \   & \ \ $3.63472$ \ \ &\ \ $3.46573$ \ \ &\ \ $3.39987$ \ \ &\ \  $3.34732$ \ \ & \ \ $3.31699$ \ \ &\ \  $3.29688$ \ \  &\ \  $3.28214$ \ \   \\
\hline
\end{tabular}}
\smallskip

\centerline{\tiny\begin{tabular}{|c||c|c|c|c|c|c|c|c|} 
\hline
$ r$& $61$ & $71$ & $81$ & $91$ & $101$ & $121$ & $151$ & $201$  \\ 
\hline
&&&&&&&&\\
\ \ $QV_r(K_{6_1})$ \ \ & \ \ $3.27076$ \ \ & \ \ $3.26165$ \ \ & \ \ $3.25417$ \ \ & \ \ $ 3.24790$ \ \ & \ \ $3.24255$ \ \  & \ \ $ 3.23390$ \ \ & \ \ $ 3.22431$ \ \ & \ \ $3.21353$  \ \  \\
\hline
\end{tabular}}
\bigskip

Figure \ref{Fig61} compares a few values of  $QV_r(K_{6_1})$ with those of the Kashev invariant $\langle K_{6_1}\rangle_r$ and with the volume  $\mathrm{vol}(S^3\setminus K_{6_1})$.

\begin{figure}[htbp]
\centering
\includegraphics[scale=0.6]{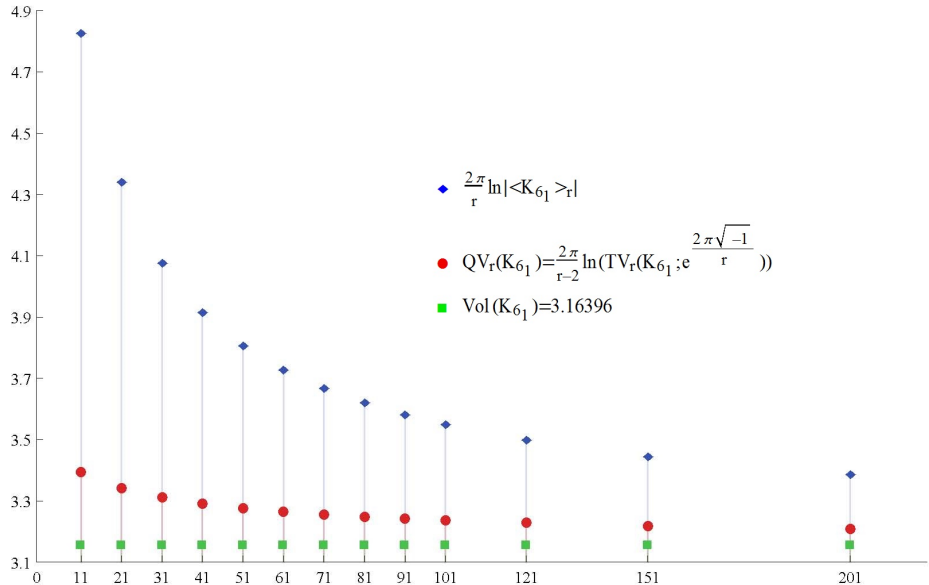}
\caption{Comparison of different invariants for $K_{6_1}$}
\label{Fig61}
\end{figure}
\subsection{Some non-orientable cusped $3$-manifolds}

\subsubsection{The Gieseking manifold} 

The manifold $N_{1_1}$ in the Callahan-Hildebrand-Weeks census \cite{CHW99}, 
also known as the Gieseking manifold, is the smallest non-orientable cusped $3$-manifold. It has an ideal triangulation with a single tetrahedron which, by an Euler characteristic calculation, has only one edge.  According to SnapPy \cite{Sna} and Regina \cite{Reg}, the Gieseking manifold has volume 
$$\mathrm{vol}(N_{1_1})\approx 1.01494.$$
 By Definition \ref{TV}, we have
\begin{equation*}
\mathrm{TV}_r(N_{1_1})=\sum_{a\in A_r} w_a
\bigg|
      \begin{matrix}
        a & a & a \\
        a & a & a \\
      \end{matrix} \bigg|,
      \end{equation*}
      where $A_r$ consist of integers $a$ such that $0\leqslant a\leqslant \lfloor(r-2)/3\rfloor$. Here $\lfloor\  \rfloor$ is the floor function that $\lfloor x\rfloor$ equals the greatest integer less than or equal to $x$. From this, we have the following table of values of $QV_r(N_{1_1})$.
\bigskip

\centerline{\tiny \begin{tabular}{|c||c|c|c|c|c|c|c|c|} 
\hline
 r &   $7$ & $9$ & $11$ & $21$ & $31$ & $41$ & $51$ & $61$   \\ 
\hline
&&&&&&&&\\
\ \ $QV_r(N_{1_1})$ \ \ & \ \  $1.81736$  \ \   & \ \ $1.66782$ \ \ &\ \ $1.62276$ \ \ &\ \ $1.43255$ \ \ &\ \  $1.33012$ \ \ & \ \ $ 1.27064$ \ \ &\ \  $1.23174$ \ \ &\ \  $1.20411$ \ \   \\
\hline
\end{tabular}}
\smallskip

\centerline{\tiny\begin{tabular}{|c||c|c|c|c|c|c|c|c|} 
\hline
$ r$  &$71$ & $81$ & $91$ & $101$ & $201$ & $301$ & $401$  &  $501$     \\ 
\hline
&&&&&&&&\\
\ \ $QV_r(N_{1_1})$ \ \ & \ \  $1.18335$ \ \ & \ \ $ 1.16711$ \ \   &   \ \ $ 1.15401$ \ \ & \ \ $ 1.14319$  \ \  & \ \ $ 1.08943$ \ \ & \ \ $1.06872$ \ \ & \ \ $1.05748$  \ \ & \ \ $1.05035$ \ \  \\
\hline
\end{tabular}}
\bigskip


\subsubsection{ Manifold $N_{2_1}$}
  According to SnapPy \cite{Sna} and Regina \cite{Reg}, the manifold $N_{2_1}$ in Callahan-Hildebrand-Weeks census \cite{CHW99} has volume 
$$\mathrm{vol}(N_{2_1})\approx  1.83193,$$
 and has the following ideal triangulation.
\[\includegraphics[width=8cm]{N2_1}\]

By Definition \ref{TV}, we have 
\begin{equation*}
\mathrm{TV}_r(N_{2_1})=\sum_{(a,b)\in A_r} w_aw_b
\bigg|
      \begin{matrix}
        a & b & b \\
        a & b & b \\
      \end{matrix} \bigg|\bigg|
      \begin{matrix}
        a & b & b \\
        a & b & b \\
      \end{matrix} \bigg|,
      \end{equation*}
      where $A_r$ consist of the pairs $(a,b)$ of elements of $I_r$ such that $(a,b,b)$ is admissible. From this, we have the following table of values of $QV_r(N_{2_1})$.
\bigskip

\centerline{\tiny\begin{tabular}{|c||c|c|c|c|c|c|c|c|} 
\hline
 r & $5$  & $7$ & $9$ & $11$ & $21$ & $31$ & $41$ & $51$ \\ 
\hline
&&&&&&&&\\
\ \ $QV_r(N_{2_1})$  \ \ &\ \  $ 2.90345$ \ \ &\ \  $2.54929$  \ \ & \ \  $2.46119$  \ \   & \ \ $2.42036$ \ \ &\ \ $ 2.20099$ \ \ &\ \ $2.11235$ \ \ &\ \  $2.06163$ \ \ & \ \ $2.02810$ \ \   \\
\hline
\end{tabular}}
\smallskip

\centerline{\tiny\begin{tabular}{|c||c|c|c|c|c|c|c|c|} 
\hline
$ r$ &  $61$  &  $71$   & $81$ & $91$ & $101$  & $121$ &  $151$ & $201$   \\ 
\hline
&&&&&&&&\\
\ \ $QV_r(N_{2_1})$ \ \ &  \ \ $2.00403$ \ \ & \ \ $ 1.98578$  \ \  & \ \  $ 1.97140$ \ \ & \ \ $ 1.95974$ \ \   &   \ \ $1.95006$ \ \  & \ \ $ 1.93489$ \ \ & \ \ $1.91876$ \ \ & \ \ $1.90140$  \ \  \\
\hline
\end{tabular}}
\bigskip


\subsection{Smallest hyperbolic $3$-manifolds with totally geodesic boundary}\label{genus2}

By \cite{KM91}, any orientable hyperbolic $3$-manifold $M_{\text{min}}$ with non-empty totally geodesic boundary that has  minimum volume admits a tetrahedral decomposition by two regular truncated hyperideal tetrahedra of dihedral angles $\pi/6$. As a consequence, such a hyperbolic manifold has volume 
$$\mathrm{vol}(M_{\text{min}})\approx6.452.$$
Such minimums are not unique and are classified in \cite{Fuj90}. In particular, the boundary of each of them is a connected surface of genus $2$, and an Euler characteristic calculation shows that each ideal triangulation of $M_{\text{min}}$ with two tetrahedra has only one edge, as in Figure~\ref{fig:genus2}. 
\begin{figure}[htbp]
\includegraphics[width=8cm]{genus2}

\caption{}
\label{fig:genus2}
\end{figure}

Therefore, for $r\geqslant 3$, 

\begin{equation*}
\mathrm{TV}_r(M_{\text{min}})=\sum_{a\in A_r} w_a
      \bigg|
      \begin{matrix}
        a & a & a \\
        a & a & a \\
      \end{matrix} \bigg|
           \bigg|
      \begin{matrix}
        a & a & a \\
        a & a & a \\
      \end{matrix} \bigg|,
      \end{equation*}
where $A_r$ consists of all integers $a$ with $0\leqslant a\leqslant \lfloor (r-2)/3\rfloor$. Here $\mathrm{TV}_r(M_{\text{min}})$ is negative for some values of $r$. In this case, we require the argument of the logarithm to be in the interval $[0,2\pi)$, so that the imaginary part of $QV_r(M_{\mathrm{min}}) = \frac{2\pi}{r-2}\log\big(\mathrm{TV}_r(M_{\mathrm{min}})\big)$ is either $0$ or $2\pi^2/(r-2)$. As a consequence, this imaginary part converges to $0$ and it suffices to consider the real part  to test the convergence of $QV_r(M_{\text{min}})$. 

We have the following table of the values of the real part $\Re (QV_r(M_{\text{min}}))$ of $QV_r(M_{\text{min}})$.

\medskip
\centerline{\tiny\begin{tabular}{|c||c|c|c|c|c|c|c|c|} 
\hline
 $r$ & $11$ & $21$ & $31$ & $41$ & $51$ & $61$ & $71$ & $81$ \\ 
\hline
&&&&&&&&\\
\ \ $\Re (QV_r(M_{\text{min}}))$ \ \ &\ \ $4.39782$ \ \ & \ \ $5.12434$\ \ & \ \ $5.44590$ \ \ &
\ \ $5.63235$\ \ & \ \ $5.75566$ \ \ & \ \ $5.84395$ \ \  & \ \ $5.91063$ \ \ & \ \ $5.96297$ \ \ \\
\hline
\end{tabular}}
\smallskip

\centerline{\tiny\begin{tabular}{|c||c|c|c|c|c|c|c|c|} 
\hline
 $r$ & $91$ & $101$ & $201$ & $301$ & $401$ & $501$ & $1001$ & $2001$ \\ 
\hline
&&&&&&&&\\
\ \ $\Re (QV_r(M_{\text{min}}))$ \ \ &\ \ $6.00526$ \ \ & \ \ $6.04022$\ \ & \ \ $6.21400$ \ \ &
\ \ $6.28075$\ \ & \ \ $6.31684$ \ \ & \ \ $6.33970$ \ \ & \ \ $6.38935$ \ \  & \ \ $6.41741$ \ \ \\
\hline
\end{tabular}}
\medskip

Figure \ref{Fig3} below illustrates the asymptotic behavior of $QV_r(M_{\text{min}})$, where the dots represent the points $(r,QV_r(M_{\text{min}}))$ and the squares represent the points $(r, \mathrm{vol}(M_{\text{min}}))$.

\begin{figure}[htbp]
\centering
\includegraphics[scale=0.6]{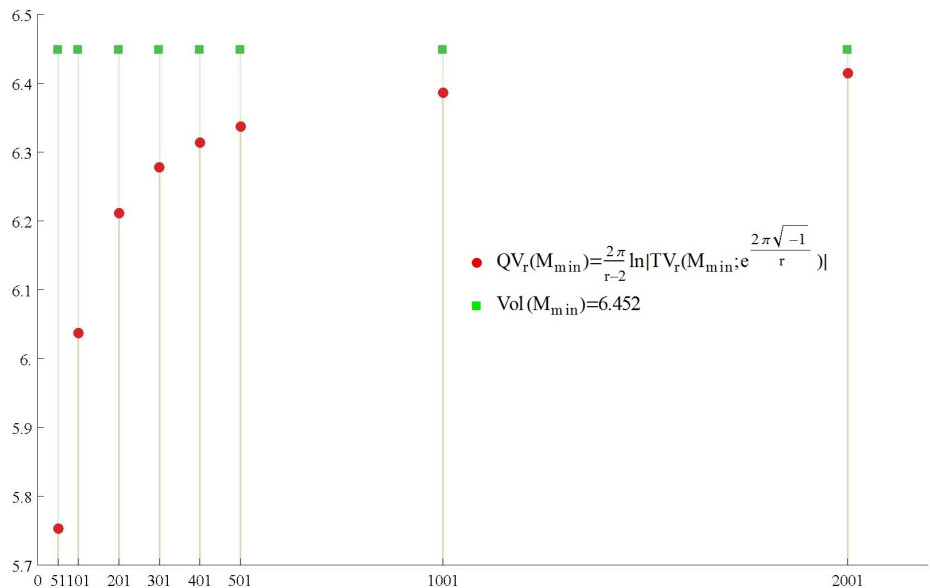}
\caption{Asymptotics of $QV_r(M_{\text{min}})$}
\label{Fig3}
\end{figure}


\section{Evidence for closed $3$-manifolds}\label{closed}

We now consider Conjectures \ref{VC} and \ref{RTVC}   in the case of closed manifolds (\cite{Ro, Tur10, W}), and provide evidence for these conjectures by numerically calculating the Reshetikhin-Turaev invariants 
of a few closed $3$-manifolds obtained by doing Dehn surgeries along the knots $K_{4_1}$ and $K_{5_2}$.

According to \cite{Li}, if $M$ is obtained from $S^3$ by doing a $p$-surgery along a knot $K$, then for an odd $r\geqslant 3$  the Reshetikhin-Turaev invariant $\mathrm{RT}_r(M;q)$ of $M$ at $q=e^{\frac{2\pi\sqrt{-1}}{r}}$ is calculated as 
\begin{equation}\label{RTL}
\mathrm{RT}_r(M;q)=\frac{2}{r}e^{-\epsilon(p)(-\frac{3}{r}-\frac{r+1}{4})\pi\sqrt{-1}}\bigg(\sum_{n=0}^{r-2}\big({\sin\frac{2(n+1)\pi}{r}}\big)^2(-e^{\frac{\pi\sqrt{-1}}{r}})^{p(n^2+2n)}J_{n+1}(K; e^{\frac{4\pi\sqrt{-1}}{r}})\bigg),
\end{equation}
where $\epsilon(p)$ is the sign of $p$, and $J_n(K;e^{\frac{4\pi\sqrt{-1}}{r}})$ is the value of the $n$-th colored Jones polynomial $J_n(K;t)$  of $K$  at $t=e^{\frac{4\pi\sqrt{-1}}{r}}$, normalized in such a way that $J_n(\text{unknot})=1$.

\begin{remark} The conventions in skein theory (\cite{KL94, BHMV, Li}) make use of a variable $A$ that is, either a primitive $2r$-th root of unity for an integer $r$, or a primitive $r$-th root of unity for an odd integer $r$. The  root of unity $q$ in the definition of the Turaev-Viro invariant then corresponds to $A^2$, while the variable $t$ of the colored Jones polynomial corresponds to $A^{4}$. Formula (\ref{RTL})  deals with the case where $A=e^{\frac{\pi\sqrt{-1}}{r}}$ for $r$ odd, in which case $q=e^{\frac{2\pi\sqrt{-1}}{r}}$ and $t=e^{\frac{4\pi\sqrt{-1}}{r}}$.

\end{remark}

\begin{remark} Formula (\ref{RTL}) is directly derived from \cite[\S 4.1]{Li}. In Lickorish's notation and letting $A=e^{\frac{\pi\sqrt{-1}}{r}}$, one has $\mu=\frac{1}{\sqrt{r}}\sin\frac{2\pi}{r}$, $\langle\mu \omega\rangle_{U_-}^\sigma=e^{-\epsilon(p)(-\frac{3}{r}-\frac{r+1}{4})\pi\sqrt{-1}}$, $\langle\mu\omega\rangle_U^{-1}=\frac{2}{\sqrt{r}}\sin\frac{2\pi}{r}$, and for $K_p$ the knot $K$ with framing $p$,
$$\langle\omega\rangle_{K_p}=\sum_{n=0}^{r-2}\Big(\frac{\sin\frac{2(n+1)\pi}{r}}{\sin\frac{2\pi}{r}}\Big)^2(-e^{\frac{\pi\sqrt{-1}}{r}})^{p(n^2+2n)}J_{n+1}(K;e^{\frac{4\pi\sqrt{-1}}{r}}).$$ Multiplying the above terms together, one gets formula (\ref{RTL}).\end{remark}

To calculate the growth rate of $\mathrm{RT}_r(M;q)$ as $r$ approaches  infinity, it is equivalent to calculate the limit  of the following quantity
$$Q_r(M)=2\pi\sqrt{-1} \log\Big(\mathrm{RT}_r(M;e^{\frac{2\pi\sqrt{-1}}{r}})/\mathrm{RT}_{r-2}(M;e^{\frac{2\pi\sqrt{-1}}{r-2}})\Big),$$
where the logarithm $\log$ is chosen so that its imaginary part lies in the interval $(-\pi,\pi).$ 

\subsection{Surgeries along the figure-eight knot} 

In this subsection, we denote by $M_p$ the manifold obtained from $S^3$ by doing a $p$-surgery along the figure-eight knot $K_{4_1}$. Recall from \cite{Thu82} that $M_p$ is hyperbolic if and only if $|p|\geqslant 5$. By  \cite{Mur11},  the $n$-th colored Jones polynomial of $K_{4_1}$ equals 
\begin{equation}\label{CJ41}
J_n(K_{4_1},t)=\sum_{k=0}^{n-1}\prod_{i=1}^k(t^{\frac{n-i}{2}}-t^{-\frac{n-i}{2}})(t^{\frac{n+i}{2}}-t^{-\frac{n+i}{2}}).
\end{equation}

 In the tables below, we list the values of $Q_r(M_p)$ modulo $\pi^2\mathbb Z$ for $p=-6, -5,  5, 6, 7, 8$ and for $r=51$, $101$, $151$, $201$, $301$ and $501$. 

\subsubsection{$p=-6$} According to SnapPy \cite{Sna},
$$\mathrm{CS}(M_{-6})+\mathrm{vol}(M_{-6})\sqrt{-1}=- 1.34092+1.28449\sqrt{-1}\mod\pi^2\mathbb Z,$$
and by (\ref{RTL}) and (\ref{CJ41}), we have
\bigskip

\centerline{\tiny\begin{tabular}{|c||c|c|c|} 
\hline
 r & $51$ & $101$ & $151$  \\ 
\hline
&&&\\
\ \ $Q_r(M_{-6})$ \ \ & \ \ $ - 1.34241+1.22717\sqrt{-1}$ \ \  & \ \ $- 1.32879+1.28425 \sqrt{-1}$ \ \  & \ \ $ - 1.33549+1.28440\sqrt{-1}$ \ \    \\
\hline
\end{tabular}}
\smallskip

\centerline{\tiny\begin{tabular}{|c||c|c|c|} 
\hline
 r & $201$ & $301$ & $501$  \\ 
\hline
&&&\\
\ \ $Q_r(M_{-6})$ \ \ & \ \ $ - 1.33786+1.28443 \sqrt{-1}$ \ \  & \ \ $- 1.33956+1.28446 \sqrt{-1}$ \ \  & \ \ $- 1.34043+1.28448 \sqrt{-1}$ \ \    \\
\hline
\end{tabular}}
\bigskip

\subsubsection{$p=-5$}  According to SnapPy \cite{Sna},
$$\mathrm{CS}(M_{-5})+\mathrm{vol}(M_{-5})\sqrt{-1}= -  1.52067+0.98137\sqrt{-1}\mod\pi^2\mathbb Z,$$
and by (\ref{RTL}) and (\ref{CJ41}), we have 
\bigskip

\centerline{\tiny\begin{tabular}{|c||c|c|c|} 
\hline
 r & $51$ & $101$ & $151$  \\ 
\hline
&&&\\
\ \ $Q_r(M_{-5})$ \ \ & \ \ $ -1.50445+0.87410 \sqrt{-1}$ \ \  & \ \ $ -1.51521+0.98003 \sqrt{-1}$ \ \  & \ \ $- 1.51712+0.98130 \sqrt{-1}$ \ \    \\
\hline
\end{tabular}}
\smallskip

\centerline{\tiny\begin{tabular}{|c||c|c|c|} 
\hline
 r & $201$ & $301$ & $501$  \\ 
\hline
&&&\\
\ \ $Q_r(M_{-5})$ \ \ & \ \ $- 1.51865+ 0.98131 \sqrt{-1}$ \ \  & \ \ $- 1.51977+0.98134 \sqrt{-1}$ \ \  & \ \ $ - 1.52035+0.98136 \sqrt{-1}$ \ \    \\
\hline
\end{tabular}}
\bigskip

\subsubsection{$p=5$} According to SnapPy \cite{Sna},
$$\mathrm{CS}(M_{5})+\mathrm{vol}(M_{5})\sqrt{-1}=1.52067+ 0.98137\sqrt{-1}\mod \pi^2\mathbb Z,$$and by (\ref{RTL}) and (\ref{CJ41}), we have

\bigskip

\centerline{\tiny\begin{tabular}{|c||c|c|c|} 
\hline
 r & $51$ & $101$ & $151$  \\ 
\hline
&&&\\
\ \ $Q_r(M_{5})$ \ \ & \ \ $ 1.50445+0.87410 \sqrt{-1}$ \ \  & \ \ $ 1.51521+0.98003 \sqrt{-1}$ \ \  & \ \ $ 1.51712+0.98130 \sqrt{-1}$ \ \    \\
\hline
\end{tabular}}
\smallskip

\centerline{\tiny\begin{tabular}{|c||c|c|c|} 
\hline
 r & $201$ & $301$ & $501$  \\ 
\hline
&&&\\
\ \ $Q_r(M_{5})$ \ \ & \ \ $1.51865+ 0.98131 \sqrt{-1}$ \ \  & \ \ $1.51977+0.98134 \sqrt{-1}$ \ \  & \ \ $ 1.52035+0.98136 \sqrt{-1}$ \ \    \\
\hline
\end{tabular}}
\bigskip

\subsubsection{$p=6$} According to SnapPy \cite{Sna},
$$\mathrm{CS}(M_{6})+\mathrm{vol}(M_{-6})\sqrt{-1}=1.34092+1.28449\sqrt{-1}\mod \pi^2\mathbb Z,$$
and by (\ref{RTL}) and (\ref{CJ41}), we have
\bigskip

\centerline{\tiny\begin{tabular}{|c||c|c|c|} 
\hline
 r & $51$ & $101$ & $151$  \\ 
\hline
&&&\\
\ \ $Q_r(M_{6})$ \ \ & \ \ $  1.34241+1.22717\sqrt{-1}$ \ \  & \ \ $ 1.32879+1.28425 \sqrt{-1}$ \ \  & \ \ $  1.33549+1.28440\sqrt{-1}$ \ \    \\
\hline
\end{tabular}}
\smallskip

\centerline{\tiny\begin{tabular}{|c||c|c|c|} 
\hline
 r & $201$ & $301$ & $501$  \\ 
\hline
&&&\\
\ \ $Q_r(M_{6})$ \ \ & \ \ $ 1.33786+1.28443 \sqrt{-1}$ \ \  & \ \ $1.33956+1.28446 \sqrt{-1}$ \ \  & \ \ $ 1.34043+1.28448 \sqrt{-1}$ \ \    \\
\hline
\end{tabular}}
\bigskip
 
\subsubsection{$p=7$} According to SnapPy \cite{Sna},
$$\mathrm{CS}(M_{7})+\mathrm{vol}(M_{7})\sqrt{-1}=1.19653+1.46378\sqrt{-1}\mod \pi^2\mathbb Z,$$
and by (\ref{RTL}) and (\ref{CJ41}), we have 
\bigskip
  
\centerline{\tiny\begin{tabular}{|c||c|c|c|} 
\hline
 r & $51$ & $101$ & $151$  \\ 
\hline
&&&\\
\ \ $Q_r(M_{7})$ \ \ & \ \ $1.10084+1.43670 \sqrt{-1}$ \ \  & \ \ $1.18016+1.46354 \sqrt{-1}$ \ \  & \ \ $ 1.18930+1.46367\sqrt{-1}$ \ \    \\
\hline
\end{tabular}}
\smallskip

\centerline{\tiny\begin{tabular}{|c||c|c|c|} 
\hline
 r & $201$ & $301$ & $501$  \\ 
\hline
&&&\\
\ \ $Q_r(M_{7})$ \ \ & \ \ $ 1.19246+ 1.46372 \sqrt{-1}$ \ \  & \ \ $ 1.19472+1.46375 \sqrt{-1}$ \ \  & \ \ $ 1.19588+ 1.46377\sqrt{-1}$ \ \    \\
\hline
\end{tabular}}
\bigskip

\subsubsection{$p=8$} According to SnapPy \cite{Sna},
$$\mathrm{CS}(M_{8})+\mathrm{vol}(M_{8})\sqrt{-1}=1.07850+1.58317\sqrt{-1}\mod \pi^2\mathbb Z,$$
and by (\ref{RTL}) and (\ref{CJ41}), we have 
\bigskip
  
\centerline{\tiny\begin{tabular}{|c||c|c|c|} 
\hline
 r & $51$ & $101$ & $151$  \\ 
\hline
&&&\\
\ \ $Q_r(M_{8})$ \ \ & \ \ $ 0.96311+1.57167 \sqrt{-1}$ \ \  & \ \ $1.05821+1.58282 \sqrt{-1}$ \ \  & \ \ $1.06949+1.58304 \sqrt{-1}$ \ \    \\
\hline
\end{tabular}}
\smallskip

\centerline{\tiny\begin{tabular}{|c||c|c|c|} 
\hline
 r & $201$ & $301$ & $501$  \\ 
\hline
&&&\\
\ \ $Q_r(M_{8})$ \ \ & \ \ $ 1.07343+ 1.58309 \sqrt{-1}$ \ \  & \ \ $1.07625+ 1.58313 \sqrt{-1}$ \ \  & \ \ $1.07769+1.58315 \sqrt{-1}$ \ \    \\
\hline
\end{tabular}}
\bigskip


\subsection{Surgeries along $K_{5_2}$}

In this subsection, we let  $M_p$ be the manifold obtained from $S^3$ by doing a $p$-surgery along the knot $K_{5_2}$. Recall that $M_p$ is hyperbolic if and only if $p\leqslant -1$ or $p\geqslant 5$. By \cite{Mas03}, the $n$-th colored Jones polynomial of $K_{5_2}$ is equal to 
\begin{equation}\label{CJ52}
J_n(K_{5_2},t)=\sum_{k=0}^{n-1}t^{-\frac{k(k+3)}{4}}c_k\prod_{i=1}^k(t^{\frac{n-i}{2}}-t^{-\frac{n-i}{2}})(t^{\frac{n+i}{2}}-t^{-\frac{n+i}{2}}),
\end{equation}
where 
$$c_k=(-1)^kt^{-\frac{5k^2+7k}{4}}\sum_{i=0}^kt^{-\frac{i^2-2i-3ki}{2}}\frac{[k]!}{[i]![k-i]!}.$$
Here the formula differs from that of \cite{Mas03} by replacing $t$ with $t^{-1}$. This comes from the chirality of $K_{5_2}$. Here we stick to the convention that is used in SnapPy \cite{Sna}, which is the mirror image of the one used in \cite{Mas03}.

In the tables below, we list the values of $Q_r(M_p)$ modulo $\pi^2\mathbb Z$ for $p=-3, -2,  -1, 5, 6, 7$ and for $r=51, 75, 101, 125, 151$ and $201$. 

\subsubsection{$p=-3$}  According to SnapPy \cite{Sna},
$$\mathrm{CS}(M_{-3})+\mathrm{vol}(M_{-3})\sqrt{-1}=- 4.45132+2.10310 \sqrt{-1}\mod \pi^2\mathbb Z,$$
 and by (\ref{RTL}) and (\ref{CJ52}), we have
 \bigskip

\centerline{\tiny\begin{tabular}{|c||c|c|c|} 
\hline
 r & $51$ & $75$ & $101$  \\ 
\hline
&&&\\
\ \ $Q_r(M_{-3})$ \ \ & \ \ $ -4.37951+2.10038\sqrt{-1}$ \ \  & \ \ $ - 4.41819+2.10200\sqrt{-1}$ \ \  & \ \ $- 4.43323+  2.10247 \sqrt{-1}$ \ \    \\
\hline
\end{tabular}}
\smallskip

\centerline{\tiny\begin{tabular}{|c||c|c|c|} 
\hline
 r & $125$ & $151$ & $201$  \\ 
\hline
&&&\\
\ \ $Q_r(M_{-3})$ \ \ & \ \ $- 4.43957+  2.10268 \sqrt{-1}$ \ \  & \ \ $- 4.44329+2.10281 \sqrt{-1}$ \ \  & \ \ $ - 4.44681+  2.10293\sqrt{-1}$ \ \    \\
\hline
\end{tabular}}
\bigskip

\subsubsection{$p=-2$} According to SnapPy \cite{Sna},
$$\mathrm{CS}(M_{-2})+\mathrm{vol}(M_{-2})\sqrt{-1}= - 4.63884+1.84359\sqrt{-1}\mod \pi^2\mathbb Z,$$
  and by (\ref{RTL}) and (\ref{CJ52}), we have 
 \bigskip

\centerline{\tiny\begin{tabular}{|c||c|c|c|} 
\hline
 r & $51$ & $75$ & $101$  \\ 
\hline
&&&\\
\ \ $Q_r(M_{-2})$ \ \ & \ \ $- 4.59073+1.84822 \sqrt{-1}$ \ \  & \ \ $ - 4.61357+ 1.84289\sqrt{-1}$ \ \  & \ \ $ -4.62490+  1.84317 \sqrt{-1}$ \ \    \\
\hline
\end{tabular}}
\smallskip

\centerline{\tiny\begin{tabular}{|c||c|c|c|} 
\hline
 r & $125$ & $151$ & $201$  \\ 
\hline
&&&\\
\ \ $Q_r(M_{-2})$ \ \ & \ \ $ -4.62978+1.84331 \sqrt{-1}$ \ \  & \ \ $ - 4.63265+1.84339\sqrt{-1}$ \ \  & \ \ $-4.63536+  1.84348 \sqrt{-1}$ \ \    \\
\hline
\end{tabular}}
\bigskip

\subsubsection{$p=-1$} According to SnapPy \cite{Sna},
$$\mathrm{CS}(M_{-1})+\mathrm{vol}(M_{-1})\sqrt{-1}= -4.86783+1.39851\sqrt{-1}\mod \pi^2\mathbb Z,$$
 and by (\ref{RTL}) and (\ref{CJ52}), we have 
 \bigskip

\centerline{\tiny\begin{tabular}{|c||c|c|c|} 
\hline
 r & $51$ & $75$ & $101$  \\ 
\hline
&&&\\
\ \ $Q_r(M_{-1})$ \ \ & \ \ $ -4.84865+1.40943\sqrt{-1}$ \ \  & \ \ $  - 4.85045+ 1.39808\sqrt{-1}$ \ \  & \ \ $   -4.85817+  1.39817\sqrt{-1}$ \ \    \\
\hline
\end{tabular}}
\smallskip

\centerline{\tiny\begin{tabular}{|c||c|c|c|} 
\hline
 r & $125$ & $151$ & $201$  \\ 
\hline
&&&\\
\ \ $Q_r(M_{-1})$ \ \ & \ \ $  - 4.86157+ 1.39827\sqrt{-1}$ \ \  & \ \ $- 4.86355+ 1.39834 \sqrt{-1}$ \ \  & \ \ $  - 4.86542+  1.39841\sqrt{-1}$ \ \    \\
\hline
\end{tabular}}
\bigskip

\subsubsection{$p=5$} According to SnapPy \cite{Sna},
$$\mathrm{CS}(M_{5})+\mathrm{vol}(M_{5})\sqrt{-1}=  - 1.52067+0.98137\sqrt{-1}\mod \pi^2\mathbb Z,$$
 and by (\ref{RTL}) and (\ref{CJ52}), we have
 \bigskip

\centerline{\tiny\begin{tabular}{|c||c|c|c|} 
\hline
 r & $51$ & $75$ & $101$  \\ 
\hline
&&&\\
\ \ $Q_r(M_{5})$ \ \ & \ \ $ - 1.50445+0.87410\sqrt{-1}$ \ \  & \ \ $ - 1.48899+0.96890 \sqrt{-1}$ \ \  & \ \ $  - 1.51521+  0.98003 \sqrt{-1}$ \ \    \\
\hline
\end{tabular}}
\smallskip

\centerline{\tiny\begin{tabular}{|c||c|c|c|} 
\hline
 r & $125$ & $151$ & $201$  \\ 
\hline
&&&\\
\ \ $Q_r(M_{5})$ \ \ & \ \ $ - 1.51539+ 0.98098\sqrt{-1}$ \ \  & \ \ $- 1.51712+0.98130 \sqrt{-1}$ \ \  & \ \ $ - 1.51865+  0.98131 \sqrt{-1}$ \ \    \\
\hline
\end{tabular}}
\bigskip

\subsubsection{$p=6$} According to SnapPy \cite{Sna},
$$\mathrm{CS}(M_{6})+\mathrm{vol}(M_{6})\sqrt{-1}= -1.51206+ 1.41406 \sqrt{-1}\mod \pi^2\mathbb Z,$$
 and by (\ref{RTL}) and (\ref{CJ52}), we have
 \bigskip

\centerline{\tiny\begin{tabular}{|c||c|c|c|} 
\hline
 r & $51$ & $75$ & $101$  \\ 
\hline
&&&\\
\ \ $Q_r(M_{6})$ \ \ & \ \ $ - 1.46756+1.40044 \sqrt{-1}$ \ \  & \ \ $ - 1.50631+1.41501 \sqrt{-1}$ \ \  & \ \ $  - 1.50836+  1.41339 \sqrt{-1}$ \ \    \\
\hline
\end{tabular}}
\smallskip

\centerline{\tiny\begin{tabular}{|c||c|c|c|} 
\hline
 r & $125$ & $151$ & $201$  \\ 
\hline
&&&\\
\ \ $Q_r(M_{6})$ \ \ & \ \ $ - 1.50968+1.41356 \sqrt{-1}$ \ \  & \ \ $ - 1.51042+1.41372\sqrt{-1}$ \ \  & \ \ $  -1.51113+  1.41386\sqrt{-1}$ \ \    \\
\hline
\end{tabular}}
\bigskip

\subsubsection{p=7}  According to SnapPy \cite{Sna},
$$\mathrm{CS}(M_{7})+\mathrm{vol}(M_{7})\sqrt{-1}=  - 1.55255+ 1.75713 \sqrt{-1}\mod \pi^2\mathbb Z,$$
 and by (\ref{RTL}) and (\ref{CJ52}), we have
 \bigskip

\centerline{\tiny\begin{tabular}{|c||c|c|c|} 
\hline
 r & $51$ & $75$ & $101$  \\ 
\hline
&&&\\
\ \ $Q_r(M_{7})$ \ \ & \ \ $ - 1.53822+1.75178 \sqrt{-1}$ \ \  & \ \ $ - 1.55297+ 1.75315\sqrt{-1}$ \ \  & \ \ $  - 1.55265+  1.75507 \sqrt{-1}$ \ \    \\
\hline
\end{tabular}}
\smallskip

\centerline{\tiny\begin{tabular}{|c||c|c|c|} 
\hline
 r & $125$ & $151$ & $201$  \\ 
\hline
&&&\\
\ \ $Q_r(M_{7})$ \ \ & \ \ $-1.55257+ 1.75582 \sqrt{-1}$ \ \  & \ \ $ - 1.55255+1.75625 \sqrt{-1}$ \ \  & \ \ $ - 1.55254+  1.75664 \sqrt{-1}$ \ \    \\
\hline
\end{tabular}}
\bigskip

\section{An integrality conjecture for torus link complements}\label{unknot}

In this section, we study the Turaev-Viro invariants for torus link complements. We propose the following Integrality Conjecture \ref{int}, and provide evidence by both rigorous (\S \ref{rigorous}) and numerical (\S \ref{integrality}) calculations. 

\begin{conjecture}\label{int}
Let $T_{(m,n)}$ be the $(m,n)$-torus link in $S^3$. If $r$ is relatively prime to $m$ and $n,$ then $\mathrm{TV}_r(S^3\setminus T_{(m,n)})$ is an integer independent of the choice of the roots of unity $q$.
\end{conjecture}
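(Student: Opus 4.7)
The strategy would reduce the problem to the explicit Rosso-Jones-Morton formula for the colored Jones polynomial of a torus link. The starting point is a Roberts-type relation, in the spirit of \cite{BP}, of the form
$$TV_r\bigl(S^3\setminus L;q\bigr) \;=\; \eta_r \sum_{c}\bigl|J_c(L;q)\bigr|^2,$$
where the sum runs over all $r$-admissible multicolorings $c$ of the components of $L$ and $\eta_r$ is a scalar normalization depending only on $r$. This identity can be established by choosing an ideal triangulation of $S^3\setminus L$ compatible with a bridge presentation of $L$, applying the shadow/chain-mail evaluation, and recognizing that the meridional edges of the triangulation are exactly labeled by the colors of the link components.

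\textbf{Steps.} After this reduction, I would substitute the Rosso-Jones formula
$$J_N\bigl(T_{(m,n)};q\bigr)\;=\;\frac{1}{[N]}\sum_{k}\epsilon_k\,q^{a_kmn+b_km+c_kn}[d_kN+e_k],$$
whose coefficients $a_k,b_k,c_k,d_k,e_k\in\mathbb Z$ are determined by $m$ and $n$ and whose index $k$ runs over a finite set. Expanding $|J_N|^2$ as a double sum and interchanging summations, the hypothesis $\gcd(r,n)=1$ should be used in two places: first, multiplication by $n$ is a bijection of $\mathbb Z/r\mathbb Z$, which after reindexing makes the whole expression invariant under the Galois action $q\mapsto q^s$ for every $s$ with $\gcd(s,2r)=1$, placing $TV_r$ in $\mathbb Q$; second, the factors depending on $q^n$ become geometric sums that collapse to Kronecker deltas modulo $r$, leaving a purely combinatorial count. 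Since each quantum $6j$-symbol lies in the ring of integers of $\mathbb Q(q)$, the state sum is automatically an algebraic integer, and a rational algebraic integer is a rational integer.

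\textbf{Main obstacle.} The heaviest lifting will be the combinatorial collapse after squaring Rosso-Jones: the product produces exponentials in $mn$, $m$, and $n$ simultaneously, and one must use $\gcd(r,n)=1$ delicately to disentangle the contributions indexed by $n$ from those indexed by $m$ before the sum can be shown to telescope. A secondary but nontrivial difficulty is pinning down the Roberts-type reduction with the correct normalization $\eta_r$, since the boundary tori of a Seifert-fibered link complement interact with the state sum in a way that requires careful bookkeeping. As the conjecture implicitly suggests, the argument should break exactly when $\gcd(r,n)>1$, because the reindexing needed for the Galois-symmetrization step is no longer available.
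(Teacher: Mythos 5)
The statement you are trying to prove is a \emph{conjecture} in this paper: the authors do not prove it. Their supporting evidence consists of direct, triangulation-based computations --- Propositions \ref{=1}, \ref{3_1}, \ref{Hopf}, \ref{T(2,4)} and \ref{T(2,6)}, which evaluate $TV_r$ for the unknot, trefoil, Hopf link, $T_{(2,4)}$ and $T_{(2,6)}$ by repeatedly specializing the Orthogonality identity (\ref{O}) to kill colors, plus numerical tables for $T_{(2,5)}$, $T_{(2,7)}$, $T_{(2,9)}$, $T_{(2,11)}$, $T_{(3,5)}$, $T_{(3,7)}$. So your proposal cannot be ``the same approach as the paper''; the question is whether your route could actually close the conjecture, and as written it has at least three genuine gaps.

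First, your foundational identity $TV_r(S^3\setminus L)=\eta_r\sum_c|J_c(L;q)|^2$ is itself an unproved theorem here --- the paper explicitly lists the relationship between $TV_r$ of a link complement and the values of the colored Jones polynomials as an open question in \S \ref{Q}. Gesturing at ``a Roberts-type relation established by choosing an ideal triangulation compatible with a bridge presentation'' does not discharge this; it is a substantial piece of shadow/chain-mail machinery with delicate normalization at the boundary tori, exactly the bookkeeping you flag as a ``secondary difficulty.'' Second, your integrality step is false as stated: quantum $6j$-symbols at roots of unity are \emph{not} elements of the ring of integers of $\mathbb{Q}(q)$. The definition in \S \ref{q6j} involves $\Delta(i,j,k)=\sqrt{[i+j-k]![j+k-i]![k+i-j]!/[i+j+k+1]!}$ --- a square root with a quantum factorial in the denominator --- and the $z$-sum has products of quantum factorials in its denominators; the individual summands of the state sum are not even rational. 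Genuine integrality of such evaluations is the subject of separate, nontrivial work (e.g.\ the Costantino reference \cite{Cos14} in the bibliography) and cannot be asserted for free. Third, the entire content of the conjecture --- why coprimality of $r$ and $n$ forces the double Rosso--Jones sum to collapse to an integer independent of $q$ (note: the conjecture claims independence of the choice among all admissible roots, which is stronger than Galois-invariance-hence-rationality) --- is deferred to the ``main obstacle'' paragraph and never carried out. What you have is a plausible research program, not a proof; every one of its three pillars still needs to be built.
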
 

\subsection{Calculations for some torus links}\label{rigorous}

In this subsection, we will rigorously calculate $\mathrm{TV}_r(M)$ for the complements of the unknot, the trefoil knot, the Hopf link and the torus links $T_{(2,4)}$ and $T_{(2,6)}$. As in the previous sections,  for a link $L$ in $S^3$ we let $$\mathrm{TV}_r(L)=\mathrm{TV}_r(S^3\setminus L).$$
  All the ideal triangulations used in this section are obtained by using Regina \cite{Reg} and SnapPy \cite{Sna}, and for simplicity, we will omit the arrows on the edges and keep only the colors.

\subsubsection{The unknot} 

\begin{proposition}\label{=1} Let $U$ be the unknot in $S^3$. Then   $$\mathrm{TV}_r(U)=1$$
for all $r\geqslant 3$ and for all $q\in\mathbb C$ such that $q^2$ is a primitive root of unity of degree $r$.
\end{proposition}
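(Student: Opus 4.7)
The plan is to compute $TV_r(S^3\setminus U)$ directly from an explicit small ideal triangulation of the unknot complement. Since $S^3\setminus U$ is a solid torus $D^2\times S^1$, an Euler-characteristic count $v - e + f - t = \chi(M) = 0$ together with $t = 1$, $f = 2$ (four tetrahedron faces glued in pairs), and $v = 1$ (the single cusp) forces $e = 2$; an explicit face-pairing realizing this can be obtained either from Regina/SnapPy or from the classical one-tetrahedron layered-solid-torus construction. In such an ideal triangulation, the six edges of the tetrahedron are partitioned into two edge classes, so every admissible coloring is specified by a pair $(a, b)$ in some subset $A_r\subset I_r\times I_r$, and Definition \ref{TV} yields
\[
TV_r(U) = \sum_{(a,b)\in A_r} w_a\, w_b\, \bigg|\begin{array}{ccc} x_{12} & x_{13} & x_{23}\\ x_{34} & x_{24} & x_{14}\end{array}\bigg|,
\]
where each $x_{ij} \in \{a, b\}$ is determined by the chosen face-pairing.

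The next step is to evaluate this double sum. My expectation is that the special form of the six-tuple, coming from the fact that the solid torus is a trivial handlebody, allows one to recognize the inner sum (over one of $a, b$) as a special case of the Orthogonality relation (\ref{O}); this produces a Kronecker delta that collapses the outer sum to a single surviving term. Using $w_0 = [1] = 1$ and the fact that the quantum $6j$-symbol with all zero entries equals $1$, this surviving term evaluates to $1$, giving $TV_r(U) = 1$ for every $r$ and every admissible root of unity $q$.

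Should the one-tetrahedron evaluation turn out to be delicate, an alternative is to use a two-tetrahedron ideal triangulation of the solid torus in which the two quantum $6j$-symbols coming from the two tetrahedra share four edge labels and differ only in the variable being summed; then (\ref{O}) applies directly and reduces the sum to $1$, exactly as in the proof of invariance under $0$-$2$ Pachner moves in Theorem \ref{QV}. Note also that because the answer does not depend on the chosen admissible $q$, it suffices to verify the identity for any convenient choice.

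The main obstacle I anticipate is bookkeeping: one must verify that the chosen face-pairing really produces $S^3\setminus U$ (rather than some other one-cusp manifold with the same combinatorial counts, such as a lens-space complement), and then check that the resulting six-tuple is in the precise form to which (\ref{O}) applies. Once these two points are in hand, the reduction to $1$ is essentially a single application of the Orthogonality identity, and the claim follows for all $r\geqslant 3$ and all admissible $q$ simultaneously.
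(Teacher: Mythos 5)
Your strategy is essentially the paper's: take a small ideal triangulation of the solid torus, write out the state sum, and use Orthogonality to collapse it to a single surviving term, which evaluates to $1$. Two details in your plan need adjusting. First, in this paper's notion of ideal triangulation every hexagonal face must be glued to another hexagonal face, so a one-tetrahedron triangulation of $S^3\setminus U$ is not readily available; the triangulation actually used (from Regina) has two tetrahedra and two edge classes $a,b$, giving
\begin{equation*}
TV_r(U)=\sum_{a,b}w_aw_b\,\bigg|\begin{array}{ccc}a&a&a\\a&a&b\end{array}\bigg|\,\bigg|\begin{array}{ccc}a&a&a\\a&a&a\end{array}\bigg|.
\end{equation*}
Second, Orthogonality does not apply \emph{directly} even here, because the summed color $b$ appears in only one of the two $6j$-symbols (the two symbols do not differ only in the summed variable, contrary to your fallback scenario); the paper first establishes the auxiliary identity
\begin{equation*}
\sum_b w_b\,\bigg|\begin{array}{ccc}a&a&a\\a&a&b\end{array}\bigg|=\delta_{0,a}
\end{equation*}
by specializing (\ref{O}) at $m=0$, $i=j=k=l=n=a$ and dividing by the explicitly known factor $\big|\begin{smallmatrix}a&a&0\\a&a&b\end{smallmatrix}\big|=1/[2a+1]$, and only then collapses the outer sum to $a=0$. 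With that identity supplied, the rest of your argument goes through exactly as you describe.
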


\begin{proof}
The complement of the  unknot  admits the  ideal triangulation represented in Figure~\ref{fig:Unknot}.

\begin{figure}[htbp]

\includegraphics[width=8cm]{unknot}
\caption{}
\label{fig:Unknot}
\end{figure}

Therefore, for each $r\geqslant 3,$ we have
\begin{equation*}
\begin{split}
\mathrm{TV}_r(U)=&\sum_{a,b}w_aw_b\bigg|
      \begin{matrix}
        a & a & a \\
        a & a & b \\
      \end{matrix} \bigg|\bigg|
      \begin{matrix}
        a & a & a \\
        a & a & a \\
      \end{matrix} \bigg|\\
      =&\sum_{a}w_a\bigg|
      \begin{matrix}
        a & a & a \\
        a & a & a \\
      \end{matrix} \bigg|\bigg(\sum_bw_b\bigg|
      \begin{matrix}
        a & a & a \\
        a & a & b \\
      \end{matrix} \bigg|\bigg),
  \end{split}
 \end{equation*}
where in the first row $(a,b)\in I_r\times I_r$ runs over all the admissible colorings at level $r,$ and in the second row $a$ is over all the elements of $I_r$ such that $(a,a,a)$ is admissible and $b$ is over all elements of $I_r$ such that $(a,a,b)$ is admissible. Then the result follows from the following identity
\begin{equation}\label{5.2}
\sum_bw_b\bigg|
      \begin{matrix}
        a & a & a \\
        a & a & b \\
      \end{matrix} \bigg|=\delta_{0,a}.
\end{equation}
To prove (\ref{5.2}), we use the Orthogonality Property. Letting $m=0,$ $s=b$ and $i=j=k=l=n=a$ in  (\ref{O}), we have 
$$\sum_bw_bw_0\bigg|
      \begin{matrix}
        a & a & 0 \\
        a & a & b \\
      \end{matrix} \bigg|\bigg|
      \begin{matrix}
        a & a & a \\
        a & a & b \\
      \end{matrix} \bigg|=\delta_{0,a},$$
where $b$ is over all elements of $I_r$ such that $(a,b)$ is admissible at level $r$. Since $w_0=1$ and  $\bigg|
      \begin{matrix}
        a & a & 0 \\
        a & a & b \\
      \end{matrix} \bigg|=\frac{1}{[2a+1]},$ we have 
      $$\sum_bw_b\bigg|
      \begin{matrix}
        a & a & a \\
        a & a & b \\
      \end{matrix} \bigg|=[2a+1]\cdot\delta_{0,a}=\delta_{0,a}.$$
\end{proof}

\begin{conjecture}\label{detect} Let $K$ be a knot in $S^3$. Then $\mathrm{TV}_r(K)=1$ for all $r\geqslant 3$ and for all $q\in\mathbb C$ such that $q^2$ is a primitive root of unity of degree $r$  if and only if $K=U$. 
\end{conjecture}

\begin{remark} It is interesting to know whether there is an $M\neq S^3\setminus U,$ not necessarily a link complement, such that $\mathrm{TV}_r(M)=1$ for all $r$ and $q$.
\end{remark}

\subsubsection{The trefoil knot}\label{T(2,3)}

\begin{proposition}\label{3_1} Let $T_{(2,3)}$ be the trefoil knot in $S^3$. Then   $$\mathrm{TV}_r( T_{(2,3)})=\lfloor\frac{r-2}{3}\rfloor+1$$
for all $r\geqslant 3$ and for all $q\in\mathbb C$ such that $q^2$ is a primitive root of unity of degree $r$.
\end{proposition}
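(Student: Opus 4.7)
My plan is to follow the template of the unknot calculation (Proposition \ref{=1}) as closely as possible. First, I would obtain an explicit ideal triangulation $\mathcal T$ of $S^3\setminus T_{(2,3)}$ via Regina/SnapPy; since the trefoil is a torus knot, $\mathcal T$ can be taken with very few tetrahedra (two is typical), and the edge-identification pattern reduces to a handful of color classes. With that in hand, I would unfold Definition \ref{TV} into an explicit finite sum
$$TV_r(T_{(2,3)}) = \sum_{c\in A_r(M,\mathcal T)} \Big(\prod_{e\in E} w_{c(e)}\Big)\Big(\prod_{\Delta\in T} |\Delta|_c\Big),$$
with one quantum $6j$-symbol per tetrahedron and a quantum weight per edge class.

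Next, I would try to collapse the sum using the Orthogonality relation (\ref{O}) applied to the color of one of the edge classes. In the unknot case the whole state sum was reduced to $\delta_{0,a}/[2a+1]$ after summing over a single color, and I expect an analogous, but more elaborate, collapse here. Concretely: identify an edge class whose color $b$ appears in a $6j$-factor paired with another $6j$-factor in the template of (\ref{O}); sum that variable first to produce a Kronecker delta; iterate. The $6j$-symbol symmetries listed after its definition may be needed to bring the expression into the exact orthogonality template. Since the answer $\lfloor(r-2)/3\rfloor+1$ equals exactly the number of integers $a$ with $0\leqslant 3a\leqslant r-2$ (i.e.\ the integers for which $(a,a,a)$ is an admissible triple), I expect the collapse to leave a single integer-valued sum, with each surviving term equal to $1$.

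Finally, I would verify that the surviving sum is indexed precisely by the admissible triples $(a,a,a)$ and that the $q$-dependent prefactors cancel, so that the result is a manifest integer independent of the chosen root of unity $q$. That independence is already consistent with Conjecture \ref{int} and should follow from the Kronecker deltas forcing exactly the integrality condition on $a$.

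The principal obstacle is combinatorial: choosing the right triangulation and the right order of orthogonality applications so the cancellations proceed cleanly. If the minimal two-tetrahedron triangulation does not immediately line up with the template of (\ref{O}), a natural workaround is to perform a $2$-$3$ (or $0$-$2$) Pachner move first and carry out the computation on the enlarged triangulation; by Theorem \ref{QV} this does not change $TV_r$, and introducing an extra tetrahedron sometimes creates the precise $6j$-symbol pair that Orthogonality requires. If even that fails to collapse everything directly, the Biedenharn--Elliot identity (\ref{BE}) is the natural second tool, used to fuse two $6j$-symbols into a single one before re-applying (\ref{O}).
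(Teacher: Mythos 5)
Your plan is essentially the paper's proof: the paper uses a two-tetrahedron ideal triangulation with two edge classes, giving $TV_r(T_{(2,3)})=\sum_{a,b}w_aw_b\left|\begin{array}{ccc}a&a&a\\a&a&b\end{array}\right|^2$, where admissibility of $(a,a,a)$ forces $a$ to be an integer with $3a\leqslant r-2$, and a single application of Orthogonality (\ref{O}) in its diagonal form ($i=j=k=l=m=n=a$, $s=b$) makes the inner sum over $b$ equal to $\delta_{aa}=1$, so the count is $\lfloor(r-2)/3\rfloor+1$. No iteration of Kronecker deltas, Pachner moves, or Biedenharn--Elliot is needed, but your identification of the index set and of each surviving term being $1$ is exactly right.
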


\begin{proof}
The complement of trefoil knot $T_{(2,3)}$  admits the  ideal triangulation represented in Figure \ref{fig:Trefoil}.

\begin{figure}[htbp]

\includegraphics[width=8cm]{trefoil}

\caption{}
\label{fig:Trefoil}
\end{figure}

Therefore, for each $r\geqslant 3,$ we have
\begin{equation*}
\mathrm{TV}_r(T_{(2,3)})=\sum_{a,b}w_aw_b\bigg|
      \begin{matrix}
        a & a & a \\
        a & a & b \\
      \end{matrix} \bigg|\bigg|
      \begin{matrix}
        a & a & a \\
        a & a &  b \\
      \end{matrix} \bigg|,
 \end{equation*}
where $(a,b)\in I_r\times I_r$ runs over all the admissible colorings at level $r$. The triple $(a,a,a)$ being admissible implies that $a\in\mathbb Z$ and $a\leqslant (r-2)/3$. Hence the right hand side equals 
$$\sum_{0\leqslant a\leqslant  \frac{r-2}{3}}\bigg(\sum_bw_bw_a\bigg|
      \begin{matrix}
        a & a & a \\
        a & a & b \\
      \end{matrix} \bigg|\bigg|
      \begin{matrix}
        a & a & a \\
        a & a & b \\
      \end{matrix} \bigg|\bigg),$$
where $a$ is over all the  integers in that range and  $b$ is over all elements of $I_r$ such that $(a,a,b)$ is admissible. Letting $i=j=k=l=m=n=a$ and $s=b$ in the Orthogonality Property (\ref{O}), we have 
$$\sum_bw_bw_a\bigg|
      \begin{matrix}
        a & a & a \\
        a & a & b \\
      \end{matrix} \bigg|\bigg|
      \begin{matrix}
        a & a & a \\
        a & a & b \\
      \end{matrix} \bigg|=1,$$
where $b$ is over all elements of $I_r$ such that $(a,a,b)$ is admissible. As a consequence,
$$\mathrm{TV}_r(T_{(2,3)})=\sum_{0\leqslant a\leqslant \frac{r-2}{3}}1=\lfloor \frac{r-2}{3}\rfloor+1.$$ 
\end{proof}

\subsubsection{The Hopf link and torus links $T_{(2,4)}$ and $T_{(2,6)}$}

\begin{proposition}\label{Hopf} Let $T_{(2,2)}$ be the Hopf link in $S^3$. Then   $$\mathrm{TV}_r(T_{(2,2)})=r-1$$
for all $r\geqslant 3$ and for all $q\in\mathbb C$ such that $q^2$ is a primitive root of unity of degree $r$.
\end{proposition}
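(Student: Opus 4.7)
My plan is to follow the same strategy used in the proofs of Propositions \ref{=1} and \ref{3_1}. First, I would exhibit an explicit ideal triangulation of the Hopf link complement $S^3 \setminus T_{(2,2)}$, which is homeomorphic to $T^2 \times I$ and admits a well-known ideal triangulation by a small number of truncated tetrahedra (obtainable from Regina/SnapPy, as in the earlier examples). I would then write down $TV_r(T_{(2,2)})$ as a state sum following Definition \ref{TV}, expressing it as a sum over admissible colorings of the few edges of $\mathcal{T}$ by elements of $I_r$, with each tetrahedron contributing a quantum $6j$-symbol and each edge contributing a weight $w_a$.

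Next, I would apply the Orthogonality identity \eqref{O} to collapse the state sum. Guided by the final answer $r-1 = |I_r|$, I expect the triangulation to come with two colored edges, say with colors $a$ and $b$, arranged so that the pair of $6j$-symbols appearing in the sum over $b$ matches the left-hand side of \eqref{O} for a suitable specialization (e.g.\ with one of the outer labels set to $0$ or with a symmetric choice of the six labels as in the trefoil case). The Orthogonality relation then produces a Kronecker delta that either fixes $b$ in terms of $a$ or evaluates the inner sum to a constant independent of $a$.

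The remaining expression should simplify to $\sum_{a \in I_r} 1$. Unlike the unknot case (where admissibility of $(a,a,a)$ forces $a = 0$) or the trefoil case (where it forces $a$ to be an integer with $a \leqslant (r-2)/3$), for the Hopf link the admissibility constraints in the reduced triangulation should impose no integrality condition on $a$, so that $a$ ranges over the full set $I_r = \{0, 1/2, 1, \dots, (r-2)/2\}$ of cardinality $r-1$, yielding $TV_r(T_{(2,2)}) = r-1$ as claimed. This also matches the pattern of interpreting these invariants as counting the number of admissible ``labels'' surviving after the orthogonality collapse.

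The main obstacle is purely combinatorial: pinning down an ideal triangulation of $S^3 \setminus T_{(2,2)}$ whose edge identifications produce a state sum in which the Orthogonality relation can be applied in one clean step. Once such a triangulation is in hand, the algebraic manipulation essentially mirrors the trefoil computation in \S \ref{T(2,3)}, and the calculation reduces to simply counting the size of $I_r$. Verifying the triangulation-independence of this answer is already guaranteed by Theorem \ref{QV}, so no additional work is needed there.
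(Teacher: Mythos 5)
Your strategy---exhibit an ideal triangulation, write the state sum of Definition \ref{TV}, collapse it with Orthogonality, and count the surviving colorings---is exactly the paper's, and the plan would succeed if executed. But the proposal stops short of the one non-routine ingredient, which you yourself flag as ``the main obstacle'': the concrete triangulation and the resulting state-sum formula. Without these there is nothing to which Orthogonality can actually be applied, so as written this is an outline rather than a proof. For the record, the paper's triangulation has three tetrahedra and three edges (not two), colored $a,b,c,$ and yields
\begin{equation*}
TV_r(T_{(2,2)})=\sum_{a,b,c}w_aw_bw_c\bigg|
      \begin{array}{ccc}
        a & a & a \\
        a & a & c \\
      \end{array} \bigg|\bigg|
      \begin{array}{ccc}
        a & a & a \\
        b & b & b \\
      \end{array} \bigg|\bigg|
      \begin{array}{ccc}
        a & a & a \\
        b & b & b \\
      \end{array} \bigg|.
\end{equation*}

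The collapse also does not take either of the two forms you anticipate. The inner sum over $c$ is precisely identity (\ref{5.2}) established in the proof of Proposition \ref{=1} for the unknot, namely that this sum equals $\delta_{0,a}$; it neither fixes one color in terms of another nor evaluates to a constant independent of $a$---it annihilates every term with $a\neq 0.$ The color that then ranges freely over all of $I_r$ is $b$: since $(0,b,b)$ is admissible for every $b\in I_r$ and each surviving summand $w_0w_b\bigg|
      \begin{array}{ccc}
        0 & 0 & 0 \\
        b & b & b \\
      \end{array}\bigg|^2$ equals $1$ (a one-term instance of Orthogonality), the total is $|I_r|=r-1.$ So your instinct that the absence of any integrality constraint on a single free color is what produces $r-1$ is correct, but in the actual computation it applies to $b$ only after $a$ has been forced to $0$ and $c$ has been summed away. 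To turn the proposal into a proof you must supply the triangulation (the one Regina/SnapPy give) and verify these two collapses explicitly.
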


\begin{proof}
The complement of the Hopf link  admits the  ideal triangulation represented in Figure \ref{fig:Hopf}.

\begin{figure}[htbp]

\includegraphics[width=12cm]{Hopflink}

\caption{}
\label{fig:Hopf}
\end{figure}
Therefore, for each $r\geqslant 3,$ we have
\begin{equation*}\label{5.4}
\begin{split}
\mathrm{TV}_r(T_{(2,2)})=&\sum_{a,b,c}w_aw_bw_c\bigg|
      \begin{matrix}
        a & a & a \\
        a & a & c \\
      \end{matrix} \bigg|\bigg|
      \begin{matrix}
        a & a & a \\
        b & b & b \\
      \end{matrix} \bigg|\bigg|
      \begin{matrix}
        a & a & a \\
        b & b & b \\
      \end{matrix} \bigg|\\
 =& \sum_{a,b}w_aw_b\bigg|
      \begin{matrix}
        a & a & a \\
        b & b & b \\
      \end{matrix} \bigg|\bigg|
      \begin{matrix}
        a & a & a \\
        b & b & b \\
      \end{matrix} \bigg|
       \bigg(\sum_cw_c\bigg|
      \begin{matrix}
        a & a & a \\
        a & a & c \\
      \end{matrix} \bigg|\bigg),
      \end{split}
 \end{equation*}
where in the first row $(a,b,c)$ runs over all the admissible colorings at level $r,$ and in the second row $c$ runs over all elements of $I_r$ such that all the involved quantum $6j$-symbols are admissible. By (\ref{5.2}), we have \begin{equation*}
\sum_cw_c\bigg|
      \begin{matrix}
        a & a & a \\
        a & a & c \\
      \end{matrix} \bigg|=\delta_{0,a}.
\end{equation*}
Therefore, $$\mathrm{TV}_r(T_{(2,2)})=\sum_{b}w_0w_b\bigg|
      \begin{matrix}
        0 & 0 & 0 \\
        b & b & b \\
      \end{matrix}\bigg|\bigg|
      \begin{matrix}
        0 & 0 & 0 \\
        b & b & b \\
      \end{matrix}\bigg|=\sum_{b}1,$$
where $b$ is over all the elements in $I_r$ such that $(0,b,b)$ is admissible. Since this holds for all elements $b$ in $I_r,$ $$\mathrm{TV}_r(T_{(2,2)})=|I_r|=r-1.$$
\end{proof}

\begin{proposition} \label{T(2,4)} Let $T_{(2,4)}$ be the $(2,4)$-torus link in $S^3$. Then   $$\mathrm{TV}_r(T_{(2,4)})=\Big(\lfloor\frac{r-2}{2}\rfloor+1\Big)\Big(\lfloor\frac{r-1}{2}\rfloor+1\Big)$$
for all $r\geqslant 3$ and for all $q\in\mathbb C$ such that $q^2$ is a primitive root of unity of degree $r$.
\end{proposition}

\begin{proof}
The complement of the torus link $T_{(2,4)}$ has the following ideal triangulation represented in Figure~\ref{fig:T24}.

\begin{figure}[htbp]
\includegraphics[width=14cm]{t_2,4_}

\caption{}
\label{fig:T24}
\end{figure}

Therefore, for each $r\geqslant 3,$ we have
\begin{equation*}
\begin{split}
\mathrm{TV}_r(T_{(2,4)})=&\sum_{(a,b,c,d)\in A_r}w_aw_bw_cw_d\bigg|
      \begin{matrix}
        a & a & b \\
        c & c & c \\
      \end{matrix} \bigg|\bigg|
      \begin{matrix}
        a & a & b \\
        c & c & c \\
      \end{matrix} \bigg|\bigg|
      \begin{matrix}
        b & b & b \\
        a & a & d \\
      \end{matrix} \bigg|\bigg|
      \begin{matrix}
        b & b & b \\
        a & a & a \\
      \end{matrix} \bigg|\\
=&      \sum_{a,b,c}w_aw_c\bigg|
      \begin{matrix}
        a & a & b \\
        c & c & c \\
      \end{matrix} \bigg|\bigg|
      \begin{matrix}
        a & a & b \\
        c & c & c \\
      \end{matrix} \bigg|\bigg|
      \begin{matrix}
        b & b & b \\
        a & a & a \\
      \end{matrix} \bigg|\bigg(\sum_dw_dw_b\bigg|
      \begin{matrix}
        b & b & b \\
        a & a & d \\
      \end{matrix} \bigg|\bigg),
\end{split}      
\end{equation*}
where in the second row $a,b,c$ run over elements of $I_r$ such that all the involved triples are admissible. We claim that
$$\sum_dw_dw_b\bigg|
      \begin{matrix}
        b & b & b \\
        a & a & d \\
      \end{matrix} \bigg|=\sqrt{-1}^{2a+2b}\sqrt{[2a+1][2b+1]}\cdot \delta_{0,b}.$$
Indeed, letting $m=0,$ $i=j=a,$ $k=l=n=b$ and $s=d$ in the Orthogonality Property (\ref{O}), we have 
$$\sum_dw_dw_b\bigg|
      \begin{matrix}
        b & b & b \\
        a & a & 0 \\
      \end{matrix} \bigg|\bigg|
      \begin{matrix}
        b & b & b \\
        a & a & d \\
      \end{matrix} \bigg|=\delta_{0,b}.$$Then the claim follows from the fact that 
      $$\bigg|
      \begin{matrix}
        b & b & b \\
        a & a & 0 \\
      \end{matrix} \bigg|=\frac{\sqrt{-1}^{2a+2b}}{\sqrt{[2a+1][2b+1]}}.$$
  Therefore,
\begin{equation*}
\begin{split}
\mathrm{TV}_r(T_{(2,4)})=& \sum_{a,c}w_aw_c\bigg|
      \begin{matrix}
        a & a & 0 \\
        c & c & c \\
      \end{matrix} \bigg|\bigg|
      \begin{matrix}
        a & a & 0 \\
        c & c & c \\
      \end{matrix} \bigg|\bigg|
      \begin{matrix}
        0 & 0 & 0 \\
        a & a & a \\
      \end{matrix} \bigg|\sqrt{-1}^{2a}\sqrt{[2a+1]}\\
      =& \sum_{a,c}(-1)^{2a}[2a+1](-1)^{2c}[2c+1]\frac{(-1)^{2a+2c}}{[2a+1][2c+1]}\frac{\sqrt{-1}^{2a}}{\sqrt{[2a+1]}}\sqrt{-1}^{2a}\sqrt{[2a+1]}
      \\
      =&\sum_{a,c}1,
  \end{split}
   \end{equation*}
   where $a,c$ run over all the elements of $I_r$ such that $(c,c,a)$ and $(a,a,0)$ are admissible. Counting the number of such pairs $(a,c),$ we have 
   $$\mathrm{TV}_r(T_{(2,4)})=\Big(\lfloor\frac{r-2}{2}\rfloor+1\Big)\Big(\lfloor\frac{r-1}{2}\rfloor+1\Big).$$
\end{proof}

\begin{proposition} \label{T(2,6)} Let $T_{(2,6)}$ be the $(2,6)$-torus link in $S^3$. Then   $$\mathrm{TV}_r(S^3\setminus T_{(2,6)})=\Big(\lfloor\frac{r-2}{3}\rfloor+1\Big)\Big(\lfloor\frac{2r-2}{3}\rfloor+1\Big)$$
for all $r\geqslant 3$ and for all $q\in\mathbb C$ such that $q^2$ is a primitive root of unity of degree $r$.
\end{proposition}

\begin{proof}
The complement of the torus link $T_{(2,6)}$ has the following ideal triangulation represented in Figure~\ref{fig:T26}.

\begin{figure}[htbp]

\includegraphics[width=14cm]{t_2,6_}

\caption{}
\label{fig:T26}
\end{figure}

Therefore, for each $r\geqslant 3,$ we have
\begin{equation*}
\begin{split}
\mathrm{TV}_r(T_{(2,6)})=&\sum_{(a,b,c,d)\in A_r}w_aw_bw_cw_d|\bigg|
      \begin{matrix}
        a & a & c \\
        b & b & a \\
      \end{matrix} \bigg|\bigg|
      \begin{matrix}
        a & a & c \\
        b & b & a \\
      \end{matrix} \bigg|\bigg|
      \begin{matrix}
        b & b & c \\
        b & b & b \\
      \end{matrix} \bigg|\bigg|
      \begin{matrix}
        b & b & d \\
        b & b & c \\
      \end{matrix} \bigg|\\
=&      \sum_{a,b,c}w_aw_b\bigg|
      \begin{matrix}
        a & a & c \\
        b & b & a \\
      \end{matrix} \bigg|\bigg|
      \begin{matrix}
        a & a & c \\
        b & b & a \\
      \end{matrix} \bigg|\bigg|
      \begin{matrix}
        b & b & c \\
        b & b & b \\
      \end{matrix} \bigg|\bigg(\sum_dw_dw_c\bigg|
      \begin{matrix}
        b & b & d \\
        b & b & c \\
      \end{matrix} \bigg|\bigg),
\end{split}      
\end{equation*}
where in the second row $a,b,c$ run over elements of $I_r$ such that all the involved triples are admissible. We claim that
$$\sum_dw_dw_c\bigg|
      \begin{matrix}
        b & b & d \\
        b & b & c \\
      \end{matrix} \bigg|=(-1)^{2b}[2b+1]\cdot\delta_{0,c}.$$
Indeed,  letting $m=0,$ $i=j=k=l=b,$ $n=c$ and $s=d$ in the Orthogonality Property (\ref{O}), we have$$\sum_dw_dw_c\bigg|
      \begin{matrix}
        b & b & d \\
        b & b & 0 \\
      \end{matrix} \bigg|\bigg|
      \begin{matrix}
        b & b & d \\
        b & b & c \\
      \end{matrix} \bigg|=\delta_{0,c}.$$
Then the claim follows from the fact that 
      $$\bigg|
      \begin{matrix}
        b & b & d \\
        b & b & 0 \\
      \end{matrix} \bigg|=\frac{(-1)^{2b}}{[2b+1]}.$$
      Therefore,
\begin{equation*}
\begin{split}
\mathrm{TV}_r(T_{(2,6)})=& \sum_{a,b}w_aw_b\bigg|
      \begin{matrix}
        a & a & 0 \\
        b & b & a \\
      \end{matrix} \bigg|\bigg|
      \begin{matrix}
        a & a & 0 \\
        b & b & a \\
      \end{matrix} \bigg|\bigg|
      \begin{matrix}
        b & b & 0 \\
        b & b & b \\
      \end{matrix} \bigg|(-1)^{2b}[2b+1]\\
      =& \sum_{a,b}(-1)^{2a}[2a+1](-1)^{2b}[2b+1]\frac{(-1)^{2a+2b}}{[2a+1][2b+1]}\frac{(-1)^{2b}}{[2b+1]}(-1)^{2b}[2b+1]\\
      =&\sum_{a,b}1,
  \end{split}
   \end{equation*}
   where $a,b$ run over all the elements of $I_r$ such that $(a,a,b)$ and $(b,b,b)$ are admissible. Counting the number of such pairs $(a,b),$ we have 
    $$\mathrm{TV}_r(T_{(2,6)})=\Big(\lfloor\frac{r-2}{3}\rfloor+1\Big)\Big(\lfloor\frac{2r-2}{3}\rfloor+1\Big).$$
\end{proof}

\begin{remark} Conjecture \ref{VC} can be generalized to non-hyperbolic $3$-manifolds by considering the Gromov norm, and Propositions  \ref{=1}, \ref{3_1}, \ref{Hopf}, \ref{T(2,4)}, \ref{T(2,6)} prove that for the corresponding cases.
\end{remark}

\subsection{Numerical evidence for Conjecture \ref{int}}\label{integrality}

In this subsection, we   provide further evidence for Conjecture \ref{int} by numerically calculating the Turaev-Viro invariants for the complements of the torus knots $T_{(2,5)},$ $T_{(3,5)},$ $T_{(2,7)},$ $T_{(3,7)},$ $T_{(2,9)}$ and $T_{(2,11)}$.

\subsubsection{ Knot $T_{(2,5)}$}

The table below contains the values of $\mathrm{TV}_r\big(T_{(2,5)}; e^{\frac{k\pi\sqrt{-1}}{r}}\big)$ for $k=1,2,3$ and $r\leqslant 20$. 
\bigskip

\centerline{\tiny\begin{tabular}{|c||c|c|c|c|c|c|c|c|c|c|c|c|c|c|c|c|c|c|c|c|c|} 
\hline
 $\ k\setminus r \ $ & $3$ & $4$ & $5$ & $6$ & $7$ & $8$ & $9$ & $10$  & $11$  & $12$ & $13$ & $14$ & $15$  & $16$ & $17$ & $18$ & $19$ & $20$ \\ 
\hline
&&&&&&&&&&&&&&&&&&\\
\ \ $1$ \ \ & \ \ $1$\ \ & \ \ $1$ \ \ &  \ \ $0.381966$ \ \  & \ \ $1$ \ \ & \ \ $3$ \ \  & \ \ $3$ \ \ & \ \ $2$ \ \ & \ \ $0.763932$ \ \ & \ \ $2 $ \ \ & \ \ $5$ \ \ & \ \ $5$\ \ & \ \ $3$ \ \ & \ \ $1.14590$ \ \  & \ \ $3$ \ \ & \ \ $7$ \ \  & \ \ $7$ \ \ & \ \ $4$ \ \ &\ \ $1.52786$ \ \ \\
\hline
&&&&&&&&&&&&&&&&&&\\
\ \ $2$ \ \ & \ \ $1$ \ \ & \ \ $$ \ \ & \ \ $2.61803$ \ \ & \ \ $$ \ \  & \ \ $3$ \ \ & \ \ $$ \ \ &   \ \ $2$ \ \ & \ \ $$ \ \ & \ \ $2$ \ \ & \ \ $$\ \ & \ \ $5$ \ \ & & \ \ $7.85410$ \ \ & \ \ $$ \ \  & \ \ $7$ \ \ & \ \ $$ \ \  &  \ \ $4$ \ \ &   \ \ $$ \ \  \\
\hline
&&&&&&&&&&&&&&&&&&\\
\ \ $3$ \ \ & \ \ $$\ \ & \ \ $1$ \ \ &  \ \ $2.61803$ \ \  & \ \ $$ \ \ & \ \ $3$ \ \  & \ \ $3$ \ \ & \ \ $$ \ \ & \ \ $5.23607$ \ \ & \ \ $2 $ \ \ & \ \ $$ \ \ & \ \ $5$\ \ & \ \ $3$ \ \ & \ \ $$ \ \  & \ \ $3$ \ \ & \ \ $7$ \ \  & \ \ $$ \ \ & \ \ $4$ \ \ &\ \ $10.4721$ \ \ \\
\hline
\end{tabular}}
\bigskip

\subsubsection{ Knot $T_{(3,5)}$}

The table below contains the values of $\mathrm{TV}_r\big(T_{(3,5)}; e^{\frac{k\pi\sqrt{-1}}{r}}\big)$ for $k=1,2,3$ and $r\leqslant 20$. 
\bigskip

\centerline{\tiny\begin{tabular}{|c||c|c|c|c|c|c|c|c|c|c|c|c|c|c|c|c|c|c|c|c|c|} 
\hline
 $\ k\setminus r \ $ & $3$ & $4$ & $5$ & $6$ & $7$ & $8$ & $9$ & $10$  & $11$  & $12$ & $13$ & $14$ & $15$  & $16$ & $17$ & $18$ & $19$ & $20$ \\ 
\hline
&&&&&&&&&&&&&&&&&&\\
\ \ $1$ \ \ & \ \ $1$\ \ & \ \ $1$ \ \ &  \ \ $0.381966$ \ \  & \ \ $1$ \ \ & \ \ $2$ \ \  & \ \ $3$ \ \ & \ \ $2$ \ \ & \ \ $1.38197$ \ \ & \ \ $2 $ \ \ & \ \ $4$ \ \ & \ \ $4$\ \ & \ \ $3$ \ \ & \ \ $ 1.76393$ \ \  & \ \ $3$ \ \ & \ \ $6$ \ \  & \ \ $6$ \ \ & \ \ $4$ \ \ &\ \ $ 2.14590$ \ \ \\
\hline
&&&&&&&&&&&&&&&&&&\\
\ \ $2$ \ \ & \ \ $1$ \ \ & \ \ $$ \ \ & \ \ $2.61803$ \ \ & \ \ $$ \ \  & \ \ $2$ \ \ & \ \ $$ \ \ &   \ \ $2$ \ \ & \ \ $$ \ \ & \ \ $2$ \ \ & \ \ $$\ \ & \ \ $4$ \ \ & & \ \ $6.23607$ \ \ & \ \ $$ \ \  & \ \ $6$ \ \ & \ \ $$ \ \  &  \ \ $4$ \ \ &   \ \ $$ \ \  \\
\hline
&&&&&&&&&&&&&&&&&&\\
\ \ $3$ \ \ & \ \ $$\ \ & \ \ $1$ \ \ &  \ \ $2.61803$ \ \  & \ \ $$ \ \ & \ \ $2$ \ \  & \ \ $3$ \ \ & \ \ $$ \ \ & \ \ $3.61803$ \ \ & \ \ $2 $ \ \ & \ \ $$ \ \ & \ \ $4$\ \ & \ \ $3$ \ \ & \ \ $ $ \ \  & \ \ $3$ \ \ & \ \ $6$ \ \  & \ \ $$ \ \ & \ \ $4$ \ \ &\ \ $ 8.85410$ \ \ \\
\hline
\end{tabular}}
\bigskip


\subsubsection{ Knot $T_{(2,7)}$}

The table below contains the values of $\mathrm{TV}_r\big(T_{(2,7)}; e^{\frac{k\pi\sqrt{-1}}{r}}\big)$ for $k=1,2,3$ and $r\leqslant 21$. 
\bigskip

\centerline{\tiny\begin{tabular}{|c||c|c|c|c|c|c|c|c|c|c|c|c|c|c|c|c|c|c|c|c|c|c|} 
\hline
 $\ k\setminus r\ $ & $3$ & $4$ & $5$ & $6$ & $7$ & $8$ & $9$ & $10$  & $11$  & $12$ & $13$ & $14$ & $15$  & $16$ & $17$ & $18$ & $19$ & $20$ & $21$ \\ 
\hline
&&&&&&&&&&&&&&&&&&&\\
\ \ $1$ \ \ & \ \ $1$\ \ & \ \ $1$ \ \ &  \ \ $2$ \ \  & \ \ $1$ \ \ & \ \ $0.307979$ \ \  & \ \ $1$ \ \ & \ \ $4$ \ \ & \ \ $3$ \ \ & \ \ $3 $ \ \ & \ \ $5$ \ \ & \ \ $2$\ \ & \ \ $0.615957$ \ \ & \ \ $2$ \ \  & \ \ $7$ \ \ & \ \ $5$ \ \  & \ \ $5$ \ \ & \ \ $8$ \ \ &\ \ $3$ \ \ & \ \ $0.923936$ \ \ \\
\hline
&&&&&&&&&&&&&&&&&&&\\
\ \ $2$ \ \ & \ \ $1$ \ \ & \ \ $$ \ \ & \ \ $2$ \ \ & \ \ $$ \ \  & \ \ $0.643104$ \ \ & \ \ $$ \ \ &   \ \ $4$ \ \ & \ \ $$ \ \ & \ \ $3$ \ \ & \ \ $$\ \ & \ \ $2$ \ \ & & \ \ $2$ \ \ & \ \ $$ \ \  & \ \ $5$ \ \ & \ \ $$ \ \  &  \ \ $8$ \ \ &   \ \ $$ \ \ & \ \ $1.92931$ \ \   \\
\hline
&&&&&&&&&&&&&&&&&&&\\
\ \ $3$ \ \ & \ \ $$\ \ & \ \ $1$ \ \ &  \ \ $2$ \ \  & \ \ $$ \ \ & \ \ $5.04892$ \ \  & \ \ $1$ \ \ & \ \ $$ \ \ & \ \ $3$ \ \ & \ \ $3 $ \ \ & \ \ $$ \ \ & \ \ $2$\ \ & \ \ $10.0978$ \ \ & \ \ $$ \ \  & \ \ $7$ \ \ & \ \ $5$ \ \  & \ \ $$ \ \ & \ \ $8$ \ \ &\ \ $3$ \ \ & \ \ $$ \ \ \\
\hline
\end{tabular}}
\bigskip


\subsubsection{ Knot $T_{(3,7)}$}

The table below contains the values of $\mathrm{TV}_r\big(T_{(3,7)}; e^{\frac{k\pi\sqrt{-1}}{r}}\big)$ for $k=1,2,3$ and $r\leqslant 21$. 
\bigskip

\centerline{\tiny\begin{tabular}{|c||c|c|c|c|c|c|c|c|c|c|c|c|c|c|c|c|c|c|c|c|c|c|} 
\hline
 $\ k\setminus r\ $ & $3$ & $4$ & $5$ & $6$ & $7$ & $8$ & $9$ & $10$  & $11$  & $12$ & $13$ & $14$ & $15$  & $16$ & $17$ & $18$ & $19$ & $20$ & $21$ \\ 
\hline
&&&&&&&&&&&&&&&&&&&\\
\ \ $1$ \ \ & \ \ $1$\ \ & \ \ $1$ \ \ &  \ \ $2$ \ \  & \ \ $1$ \ \ & \ \ $0.198062$ \ \  & \ \ $1$ \ \ & \ \ $3$ \ \ & \ \ $3$ \ \ & \ \ $3 $ \ \ & \ \ $4$ \ \ & \ \ $2$\ \ & \ \ $0.841166$ \ \ & \ \ $2$ \ \  & \ \ $2$ \ \ & \ \ $5$ \ \  & \ \ $5$ \ \ & \ \ $6$ \ \ &\ \ $3$ \ \ & \ \ $1.03923$ \ \ \\
\hline
&&&&&&&&&&&&&&&&&&&\\
\ \ $2$ \ \ & \ \ $1$ \ \ & \ \ $$ \ \ & \ \ $2$ \ \ & \ \ $$ \ \  & \ \ $3.24698$ \ \ & \ \ $$ \ \ &   \ \ $3$ \ \ & \ \ $$ \ \ & \ \ $3$ \ \ & \ \ $$\ \ & \ \ $2$ \ \ & & \ \ $2$ \ \ & \ \ $$ \ \  & \ \ $5$ \ \ & \ \ $$ \ \  &  \ \ $6$ \ \ &   \ \ $$ \ \ & \ \ $11.5429$ \ \   \\
\hline
&&&&&&&&&&&&&&&&&&&\\
\ \ $3$ \ \ & \ \ $$\ \ & \ \ $1$ \ \ &  \ \ $2$ \ \  & \ \ $$ \ \ & \ \ $1.55496$ \ \  & \ \ $1$ \ \ & \ \ $$ \ \ & \ \ $3$ \ \ & \ \ $3 $ \ \ & \ \ $$ \ \ & \ \ $2$\ \ & \ \ $1.86294$ \ \ & \ \ $$ \ \  & \ \ $2$ \ \ & \ \ $5$ \ \  & \ \ $$ \ \ & \ \ $6$ \ \ &\ \ $3$ \ \ & \ \ $$ \ \ \\
\hline
\end{tabular}}
\bigskip


\subsubsection{ Knot $T_{(2,9)}$}

The table below contains the values of $\mathrm{TV}_r\big(T_{(2,9)}; e^{\frac{k\pi\sqrt{-1}}{r}}\big)$ for $k=1,2,3$ and $r\leqslant 22$. 
\bigskip

\centerline{\tiny\begin{tabular}{|c||c|c|c|c|c|c|c|c|c|c|c|c|c|c|c|c|c|c|c|c|c|c|} 
\hline
 $\ k\setminus r\ $ & $3$ & $4$ & $5$ & $6$ & $7$ & $8$ & $9$ & $10$  & $11$  & $12$ & $13$ & $14$ & $15$  & $16$ & $17$ & $18$ & $19$ & $20$  & $21$ & $22$  \\ 
\hline
&&&&&&&&&&&&&&&&&&&&\\
\ \ $1$ \ \ & \ \ $1$\ \ & \ \ $1$ \ \ &  \ \ $1$ \ \  & \ \ $2$ \ \ & \ \ $3$ \ \  & \ \ $1$ \ \ & \ \ $0.283119$ \ \ & \ \ $1$ \ \ & \ \ $5$ \ \ & \ \ $4$ \ \ & \ \ $3$\ \ & \ \ $3$ \ \ & \ \ $5$ \ \  & \ \ $7$ \ \ & \ \ $2$ \ \  & \ \ $0.566237$ \ \ & \ \ $2$ \ \ & \ \ $9$\ \ & \ \ $7$ \ \  &  \ \ $5$ \ \  \\
\hline
&&&&&&&&&&&&&&&&&&&&\\
\ \ $2$ \ \ & \ \ $1$ \ \ & \ \ $$ \ \ & \ \ $1$ \ \ & \ \ $$ \ \  & \ \ $3$ \ \ & \ \ $$ \ \ &   \ \ $0.426022$ \ \ & \ \ $$ \ \ & \ \ $5$ \ \ & \ \ $$\ \ & \ \ $3$ \ \ & & \ \ $5$ \ \ & \ \ $$ \ \  & \ \ $2$ \ \ & \ \ $$ \ \  &  \ \ $2$ \ \ & \ \ $$ \ \  & \ \ $7$ \ \ &  \ \ $$ \ \  \\
\hline
&&&&&&&&&&&&&&&&&&&&\\
\ \ $3$ \ \ & \ \ $$\ \ & \ \ $1$ \ \ &  \ \ $1$ \ \  & \ \ $$ \ \ & \ \ $3$ \ \  & \ \ $1$ \ \ & \ \ $$ \ \ & \ \ $1$ \ \ & \ \ $5$ \ \ & \ \ $$ \ \ & \ \ $3$\ \ & \ \ $3$ \ \ & \ \ $$ \ \  & \ \ $7$ \ \ & \ \ $2$ \ \  & \ \ $$ \ \ & \ \ $2$ \ \ & \ \ $9$\ \ & \ \ $$ \ \  &  \ \ $5$ \ \  \\
\hline
\end{tabular}}
\bigskip

\subsubsection{ Knot $T_{(2,11)}$}

The table below contains the values of $\mathrm{TV}_r\big(T_{(2,11)}; e^{\frac{k\pi\sqrt{-1}}{r}}\big)$ for $k=1,2,3$ and $r\leqslant 22$. 
\bigskip

\centerline{\tiny\begin{tabular}{|c||c|c|c|c|c|c|c|c|c|c|c|c|c|c|c|c|c|c|c|c|c|c|c|c|c|c|} 
\hline
 $\ k\setminus r\ $ & $3$ & $4$ & $5$ & $6$ & $7$ & $8$ & $9$ & $10$  & $11$  & $12$ & $13$ & $14$ & $15$  & $16$ & $17$ & $18$ & $19$ & $20$ & $21$  & $22$  \\ 
\hline
&&&&&&&&&&&&&&&&&&&&\\
\ \ $1$ \ \ & \ \ $1$\ \ & \ \ $1$ \ \ &  \ \ $1$ \ \  & \ \ $1$ \ \ & \ \ $2$ \ \  & \ \ $3$ \ \ & \ \ $4$ \ \ & \ \ $1$ \ \ & \ \ $0.271554 $ \ \ & \ \ $1$ \ \ & \ \ $6$\ \ & \ \ $5$ \ \ & \ \ $4$ \ \  & \ \ $3$ \ \ & \ \ $3$ \ \  & \ \ $5$ \ \ & \ \ $7$ \ \ &\ \ $9$ \ \ & \ \ $2 $ \ \ & \ \ $0.543108$ \ \   \\
\hline
&&&&&&&&&&&&&&&&&&&&\\
\ \ $2$ \ \ & \ \ $1$\ \ & \ \ $$ \ \ &  \ \ $1$ \ \  & \ \ $$ \ \ & \ \ $2$ \ \  & \ \ $$ \ \ & \ \ $4$ \ \ & \ \ $$ \ \ & \ \ $0.353253$ \ \ & \ \ $$ \ \ & \ \ $6$\ \ & \ \ $$ \ \ & \ \ $4$ \ \  & \ \ $$ \ \ & \ \ $3$ \ \  & \ \ $$ \ \ & \ \ $7$ \ \ &\ \ $$ \ \ & \ \ $2$ \ \ & \ \ $$\ \  \\
\hline
&&&&&&&&&&&&&&&&&&&&\\
\ \ $3$ \ \ & \ \ $$\ \ & \ \ $1$ \ \ &  \ \ $1$ \ \  & \ \ $$ \ \ & \ \ $2$ \ \  & \ \ $3$ \ \ & \ \ $$ \ \ & \ \ $1$ \ \ & \ \ $0.582964$ \ \ & \ \ $$ \ \ & \ \ $6$\ \ & \ \ $5$ \ \ & \ \ $$ \ \  & \ \ $3$ \ \ & \ \ $3$ \ \  & \ \ $$ \ \ & \ \ $7$ \ \ &\ \ $9$ \ \ & \ \ $ $ \ \ & \ \ $1.16593$ \ \   \\
\hline
\end{tabular}}
\bigskip


\end{document}